\documentclass[11pt]{amsart}
\usepackage{amsmath,amsfonts, mathtools,amssymb,latexsym,amssymb,amsthm,adjustbox,mathrsfs,amsbsy, bm,tikz-cd,indentfirst, makecell,graphicx, verbatim, calc, enumerate,xy}
\usepackage[top=1in,bottom=1.3in,left=1in,right=1in]{geometry}
\usepackage[colorlinks=true,linkcolor=blue]{hyperref}
\usepackage{listings}
\usepackage{xcolor}
\theoremstyle{plain}
\newtheorem{theorem}{Theorem}[section]
\newtheorem{lemma}[theorem]{Lemma}

\newtheorem{corollary}[theorem]{Corollary}
\newtheorem{proposition}[theorem]{Proposition}
\newtheorem{remark}[theorem]{Remark}

\newtheorem{question}[theorem]{Question}

\newtheorem{example}[theorem]{Example}
\newtheorem{point}[theorem]{}

\newcommand{\m}{\mathfrak{m}}
\newcommand{\n}{\mathfrak{n}}

\newcommand{\N}{\mathbb{N}}

\newcommand{\wt}{\widetilde }

\newcommand{\depth}{\operatorname{depth}}

\newcommand{\reg}{\operatorname{reg}}
\newcommand{\cx}{\operatorname{cx}}

\usepackage{graphicx}
\usepackage{subfigure}
\usepackage{epsfig}
\usepackage{epstopdf}
\usepackage{float}
\theoremstyle{plain}

\usepackage{xcolor}
\usepackage{titlesec}

\titleformat{name=\section}{}{\thetitle.}{0.5em}{\centering\scshape\large}

\begin{document}
\title{\large \textbf{Bounds on Hilbert coefficients of Cohen-Macaulay modules having finite projective dimension}}

\author{Samarendra Sahoo}
\email{samarendra.s.math@gmail.com}
\address{ Indian Institute of Technology Dharwad, Dharwad 580011, Karnataka, India}
\date{\today}

\subjclass{Primary 13A30, 13C14, 13D40, 13D07}
\keywords{Associated graded rings and modules, Cohen-Macaulay, Strict complete intersections, Hilbert coefficients, Loewy length, Castelnuovo-Mumford regularity}

\begin{abstract}
Let $(A,\mathfrak{m})$ be a Gorenstein local ring with $G(A)$ Cohen-Macaulay, and let $M$ be a Cohen-Macaulay $A$-module of finite projective dimension. In \cite{Quasipure}, the authors proved that $e_1(M)\geq \binom{c+1}{2}$, where $c=\operatorname{reg}G(A)$ and $e_i(M)$ is the $i$th Hilbert coefficient of $M$. We first show that this bound remains valid when $A$ is Cohen-Macaulay. We then study upper bounds for $e_2(M)$ when $e_1(M)=\binom{c+1}{2}+i$ for $i=1,2$, and investigate the consequences of equality. In particular, we obtain depth properties and explicit descriptions of the $h$-polynomial of $G(M)$. Finally, we extend these results to strict complete intersection rings without assuming that $M$ has finite projective dimension.
\end{abstract}

\maketitle

\section{Introduction}

Bounding Hilbert coefficients is a classical problem in commutative algebra. Sharp bounds for these numerical invariants often provide valuable information about the underlying algebraic and homological structure of rings and modules. In particular, equality cases in such bounds can lead to strong structural consequences. Motivated by these phenomena, in this paper we study bounds for the second Hilbert coefficient of a Cohen-Macaulay module and investigate the structural consequences when these bounds are attained.

Let $(A,\mathfrak{m})$ be a local ring of dimension $d$ with residue field $k=A/\mathfrak{m}$, and let $M$ be a finitely generated $A$-module. Let $G(A)=\bigoplus_{n\geq 0}\mathfrak{m}^n/\mathfrak{m}^{n+1}$ be the associated graded ring of $A$, and let $G(M)=\bigoplus_{n\geq 0}\mathfrak{m}^nM/\mathfrak{m}^{n+1}M$ be the associated graded module of $M$, considered as a graded $G(A)$-module. We denote the length of an $A$-module $E$ by $\lambda_A(E)$.

Let $M$ be an $A$-module of dimension $r$. The Hilbert series of $M$ is given by
$$H_M(z)=\sum_{i\geq 0}\lambda_A(\mathfrak{m}^iM/\mathfrak{m}^{i+1}M)z^i
=\frac{h_M(z)}{(1-z)^r},$$ where $h_M(z)\in\mathbb{Z}[z]$ is called the $h$-polynomial of $M$. The integer
$e_i(M)={h_M^{(i)}(1)}/{i!}$ is called the $i$-th Hilbert coefficient of $M$, where $h_M^{(i)}(z)$ denotes the $i$-th derivative of $h_M(z)$.

Let $(A,\mathfrak{m})$ be a Gorenstein local ring such that $G(A)$ is Cohen-Macaulay, and let $M$ be a Cohen-Macaulay $A$-module of finite projective dimension. In Section 4 of \cite{Quasipure}, the authors establish the lower bound $e_1(M)\geq \binom{c+1}{2}$, where $c=\reg G(A)$ (for definition of regularity, see \ref{reg1}). Moreover, they show that equality in this bound imposes a strong structural condition: if $e_1(M)=\binom{c+1}{2}$, then the associated graded module $G(M)$ is Cohen-Macaulay.

This result highlights the connection between the first Hilbert coefficient and the structure of the associated graded module. In particular, the bound on $e_1(M)$ is expressed in terms of $\reg G(A)$, while equality forces $G(M)$ to be Cohen-Macaulay. This naturally leads to the question of whether such a phenomenon persists under weaker assumptions and whether bounds can be obtained for higher Hilbert coefficients. More precisely, we are interested in the following questions.

\begin{question}
\begin{enumerate}
    \item Can the Gorenstein hypothesis on $A$ in the above result be weakened to the assumption that $A$ is Cohen-Macaulay?
    \item Can one obtain bounds for the second Hilbert coefficient $e_2(M)$ in terms of $\reg G(A)$?
\end{enumerate}
\end{question}

In this paper, we answer both questions affirmatively. We establish upper bounds for the second Hilbert coefficient of a Cohen-Macaulay module under the hypotheses $e_1(M)=\binom{c+1}{2}+i$, where $i=1,2$, and investigate the structural consequences of the extremal cases. In particular, we show that when $e_2(M)$ attains the corresponding upper bound, the associated graded module $G(M)$ satisfies strong depth properties. We first establish the following lower bounds for $e_0(M)$ and $e_1(M)$, together with the Cohen-Macaulayness of $G(M)$ in the extremal case.

\begin{proposition}[Lemma \ref{lowerbound}, Proposition \ref{reg}] \label{r1}
    Let $(A,\mathfrak{m})$ be a Cohen-Macaulay local ring of dimension $d$ such that $G(A)$ is Cohen-Macaulay. Let $M$ be a Cohen-Macaulay $A$-module of dimension $r<d$ and finite projective dimension. Set $c=\operatorname{reg} G(A)$. Then $e_0(M)\geq\mu(M)+c$ and $e_1(M)\geq\binom{c+1}{2}$. If $e_1(M)$ attains its lower bound then $G(M)$ is Cohen-Macaulay.
\end{proposition}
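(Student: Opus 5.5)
We reduce to the Artinian case and then compare Hilbert series through a minimal free resolution. Since the residue field may be assumed infinite, we choose $x_1,\dots,x_r$ in $\m$ whose initial forms $x_1^*,\dots,x_r^*$ form a regular sequence on $G(A)$ (possible because $G(A)$ is Cohen-Macaulay) and which are also superficial for $M$. Set $B=A/(x_1,\dots,x_r)$ and $N=M/(x_1,\dots,x_r)M$. Then $B$ is Cohen-Macaulay of dimension $d-r>0$, and $G(B)=G(A)/(x_1^*,\dots,x_r^*)$ is Cohen-Macaulay with $\reg G(B)=c$. The module $N$ has finite length and finite projective dimension over $B$. Moreover $\mu(N)=\mu(M)$, $e_0(M)=e_0(N)=\lambda(N)$, and $e_1(M)\ge e_1(N)$, where $e_1$ of a finite length module means $\sum_n n\,\lambda(\m^nN/\m^{n+1}N)$. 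By Sally-machine type results, if equality forces $G(N)$ to be Cohen-Macaulay (automatic for finite length $N$) together with $e_1(M)=e_1(N)$, then $G(M)$ is Cohen-Macaulay. So the equality clause reduces to showing that $e_1(M)=\binom{c+1}{2}$ forces $e_1(M)=e_1(N)$, with every inequality in the chain below being an equality.

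The key claim is that for a nonzero finite length module $N$ of finite projective dimension over $B$, the Loewy length $\ell\ell(N)$, i.e. the least $n$ with $\m^nN=0$, is at least $c+1$. We would argue as follows. Reduce further by a maximal $G(B)$-regular sequence of initial forms, taken generic for $N$'s resolution where possible, to an Artinian ring $C$ with $G(C)$ Artinian and top socle degree $c$. Then use the standard fact (Avramov–Buchweitz–Iyengar–Miller, ``Homology of perfect complexes'') that $\m^{\ell\ell(N)}$ annihilates nothing less than a parameter ideal obstruction: for modules of finite projective dimension, $\ell\ell_A(N)\ge \ell\ell$ of $B/(\text{parameter system})$ in the form $\ell\ell(N)\ge \ell\ell(G(B)/\ell)$ for a generic linear system $\ell$, and the latter equals $c+1$. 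Alternatively we would adapt the argument from \cite{Quasipure}, which only used that a finite-projective-dimension module has $\operatorname{Ext}$/Tor rigidity giving this Loewy bound, and which does not need Gorenstein in this step.

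This Loewy length bound is the main obstacle, and we would first try to take it directly from \cite{Quasipure} Section 4 or from Avramov et al. Granting it, $\m^iN\ne\m^{i+1}N$ for $0\le i\le c$, so each graded piece satisfies $\lambda(\m^iN/\m^{i+1}N)\ge1$ for $i\le c$, and the degree-zero piece has length $\mu(N)$. Summing gives
\[
e_0(M)=\lambda(N)\ge \mu(M)+c,\qquad e_1(M)\ge e_1(N)=\sum_i i\,\lambda(\m^iN/\m^{i+1}N)\ge \sum_{i=1}^{c} i=\binom{c+1}{2}.
\]

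For the equality case, if $e_1(M)=\binom{c+1}{2}$ then both inequalities are equalities. Hence $e_1(M)=e_1(N)$, and by the characterization recalled above (e.g. via the Huckaba–Marley/Puthenpurakal criterion $e_1(M)=\sum_n\lambda(\m^{n+1}M/J\m^nM)$ together with Valabrega–Valla), $x_1^*,\dots,x_r^*$ is $G(M)$-regular. Therefore $G(M)$ is Cohen-Macaulay.
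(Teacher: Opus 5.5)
Your outline is the paper's proof of Lemma~\ref{lowerbound} and Proposition~\ref{reg}. You cut down by $x_1,\dots,x_r$ with $x_i^*$ regular on $G(A)$ and superficial for $M$. You use $\mu(N)=\mu(M)$, $e_0(M)=\lambda(N)$, $e_1(M)\ge e_1(N)$ and $\reg G(B)=c$. You then read both bounds off the Hilbert function of $N$ once $\ell\ell(N)\ge c+1$.

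\textbf{The gap.} The inequality $\ell\ell(N)\ge c+1$ is the one nontrivial input, you only grant it, and neither of your suggested routes supplies it.
\begin{itemize}
\item Cutting $B$ further down to an Artinian $C=B/(\underline{y})$ does not carry $N$ along. Since $N$ has depth zero, if $F$ is a finite free resolution of $N$ over $B$, then $F\otimes_B C$ is a perfect $C$-complex whose homology is the Koszul homology $H_\bullet(\underline{y};N)$. This is nonzero in several degrees. Avramov--Buchweitz--Iyengar--Miller type theorems bound the \emph{sum} of the Loewy lengths of these homology modules, not $\ell\ell(N)$ by itself.
\item Borrowing the bound from \cite{Quasipure} does not cover the present case, because there $A$ is Gorenstein. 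What lets this statement assume only that $A$ is Cohen--Macaulay is \cite[Theorem B]{nawaz}: $\ell\ell(E)\ge\reg G(A)+1$ for every nonzero finite-length module $E$ of finite projective dimension over a Cohen--Macaulay $A$ with $G(A)$ Cohen--Macaulay.
\end{itemize}
Applying that theorem to $N$ over $B$ is the missing step. With it, your numerical argument is exactly the paper's.

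\textbf{The equality case.} It is right in substance: $e_1(M)=e_1(N)$ does force $G(M)$ to be Cohen--Macaulay, and your ``Sally-machine'' assertion is correct. However, the parenthetical justification does not work as written for $r\ge 2$. Huckaba--Marley gives the \emph{upper} bound $e_1(M)\le\sum_{n\ge0}\lambda(\m^{n+1}M/J\m^nM)$. The same sum also dominates $e_1(N)=\sum_{n\ge0}\lambda\bigl(\m^{n+1}M/(\m^{n+1}M\cap JM)\bigr)$. So $e_1(M)=e_1(N)$ yields neither the Huckaba--Marley equality nor the Valabrega--Valla condition for $J$ directly.

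The paper instead argues in dimension one. Set $L=M/(x_1,\dots,x_{r-1})M$. Then \cite[Corollary 10]{HCCMM} gives
$e_1(M)=e_1(L)=e_1(N)+\sum_{n\ge1}\lambda\bigl((\m^{n+1}L:_L x_r)/\m^nL\bigr)$.
Equality therefore makes $x_r^*$ regular on $G(L)$, and Sally's descent lifts this to $G(M)$ being Cohen--Macaulay.
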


The preceding proposition provides the starting point for our analysis of the cases in which $e_1(M)$ is close to its lower bound. To study these near-extremal cases, we reduce the problem to a suitable lower-dimensional setting using an $A\oplus M$-superficial sequence. The deviation from the extremal case is measured by the invariant $\gamma$, which is bounded above by the excess of $e_1(M)$ over its lower bound. More precisely, with the same hypothesis as in Proposition \ref{r1}, let $x_1,\ldots,x_{r-2},x,y$ be an $A\oplus M$-superficial sequence. Set $c=\reg G(A)$, $L=M/(x_1,\ldots,x_{r-2},x)M$, $N=L/yL$ and $(B,\n)=(A/(x_1,\ldots,x_{r-2},x),\m/(x_1,\ldots,x_{r-2},x))$. If $e_1(M)=\binom{c+1}{2}+s$, we prove that
$\displaystyle \gamma=\sum_{i\geq 1} \lambda_B \left(\dfrac{\n^{i+1}L:y}{\n^iL }\right)\leq s$ (see Lemma \ref{kam}).

We first consider the case where $e_1(M)$ exceeds its lower bound by one. The bound on $\gamma$ then leaves only two possibilities, namely $\gamma=0$ and $\gamma=1$. These two cases lead to distinct structural behaviours for $G(M)$ and allow us to obtain corresponding upper bounds for $e_2(M)$. We prove the following result.

\begin{theorem}[Theorem \ref{main+1}] \label{r2}
     (with the same hypothesis as in Proposition \ref{r1}) If $e_1(M)=\binom{c+1}{2}+1$ then the following hold. 
     \begin{enumerate}
         \item If $\gamma=0$ then $G(M)$ is Cohen-Macaulay. Moreover, $e_2(M)=\binom{c+1}{3}$ with the $h$-polynomial $h_M(z) = \mu(M)+2z+z^2+\ldots+ z^{c}.$
         \item  If $\gamma=1$ then $e_2(M)\leq \binom{c+1}{3}+(c-1).$ Further, if equality holds, then $\depth G(M) = r-1$ and $h_M(z)=\mu(M)+z+\ldots+z^{c-2}+2z^{c}$.
     \end{enumerate} 
\end{theorem}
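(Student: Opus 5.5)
We reduce to dimension one. Choose the $A\oplus M$-superficial sequence $x_1,\ldots,x_{r-2},x,y$ as in the statement. Then $L$ is a one-dimensional Cohen--Macaulay module of finite projective dimension over the one-dimensional Cohen--Macaulay ring $B$, and $G(B)$ is Cohen--Macaulay with $\reg G(B)=c$. Superficial reduction preserves $e_0$ and $e_1$. It also preserves $e_2$ as long as the depth of the relevant associated graded module is at least one, and Sally descent passes depth statements back up to $G(M)$. So the goal is to understand $h_L(z)$ through the Artinian module $N=L/yL$, where $h_N(z)=\sum_i \lambda(\n^iN/\n^{i+1}N)z^i$.

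The key identity is the standard one in dimension one:
\[
h_L(z)=h_N(z)-(1-z)\sum_{i\geq 0}\lambda\bigl((\n^{i+1}L:y)/\n^iL\bigr)z^i .
\]
Evaluating its derivative at $z=1$ gives $e_1(L)=\sum_i\lambda(\n^iN)+\gamma$ with the paper's $\gamma$. Combining this with Lemma~\ref{kam} and with the argument behind Proposition~\ref{r1} (Lemma~\ref{lowerbound}, Proposition~\ref{reg}) gives two facts. First, $N$ has Loewy length at least $c+1$, so $\lambda(\n^iN)\ge 1$ for $i=1,\dots,c$ and hence $\sum_{i\ge1}\lambda(\n^iN)\ge\binom{c+1}{2}$ after the appropriate indexing. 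Second, $\gamma\le 1$.

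Case $\gamma=0$. All the colon quotients vanish, so $G(L)$ is Cohen--Macaulay by the Valabrega--Valla criterion. Sally descent then makes $G(M)$ Cohen--Macaulay, and $h_M=h_L=h_N$. Since $\sum_{i\ge1}\lambda(\n^iN)=\binom{c+1}{2}+1$ exceeds the minimal value by exactly one, the Hilbert function of $N$ is $\mu(M),1,1,\dots,1$ (with $c$ ones) plus a single extra unit. That unit has to sit in degree $1$: when the Hilbert function is $1$ in some degree, Macaulay's bound forces it to be at most $1$ afterwards, and any extra unit in a degree $j>1$ would force extra units in the lower degrees too. This gives $h_M(z)=\mu(M)+2z+z^2+\cdots+z^c$. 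Computing $e_2=\sum_j\binom{j}{2}h_j$ then gives $e_2=\binom{c+1}{3}$.

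Case $\gamma=1$. Now $\sum_{i\ge1}\lambda(\n^iN)=\binom{c+1}{2}$, so $h_N=\mu(M)+z+\cdots+z^c$. Exactly one colon quotient is nonzero, say in degree $j$, and it has length one. The identity then gives $h_L(z)=h_N(z)-(1-z)z^j$, and differentiating twice yields $e_2(L)=\binom{c+1}{3}+j$. The degree of this single defect is bounded by the structure of $L$: since $\n^{i}L=y\n^{i-1}L$ for $i>c$ (the reduction number is at most $c$ because $N$ has Loewy length $c+1$ and $\gamma=1$), we get $j\le c-1$, and hence $e_2(M)\le e_2(L)\le\binom{c+1}{3}+(c-1)$. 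The inequality $e_2(M)\le e_2(L)$ comes from the usual comparison $e_2(M)=e_2(L)-\sum$ of nonnegative terms measuring the failure of the superficiality of $x_1,\ldots,x_{r-2},x$ on $G$.

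For equality we need $j=c-1$ and no loss in passing from $L$ up to $M$. Then $h_M=h_L=\mu(M)+z+\cdots+z^{c-2}+2z^c$. Depth $G(M)\ge r-1$ follows from Sally descent, since the only loss of depth occurs at the final element $y$ on $G(L)$. Depth is not $r$, because $\gamma\neq0$ means $G(L)$ is not Cohen--Macaulay.

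The step I expect to be hardest is the bound $j\le c-1$, together with showing that the higher-dimensional comparison terms vanish in the equality case. For $j\le c-1$ I would first use the fact that the socle degree of $N$ is $c$, so $\n^{c+1}L\subseteq yL$; this gives $\n^{c+1}L:y\supseteq \n^cL$ with equality forced, which rules out $j\ge c$. For the vanishing of the comparison terms I would use Lemma~\ref{kam} applied to the intermediate quotients.
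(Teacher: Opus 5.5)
The gap in your proposal is the bound $j\le c-1$ in the case $\gamma=1$; everything else (the reduction to $L$ and $N$, the identity $h_L=h_N-(1-z)s(z)$, the comparison $e_2(M)\le e_2(L)$ as in Lemma~\ref{kam}, Sally descent, and the case $\gamma=0$) is sound and follows the paper's route. That bound is the entire content of the inequality, and your justification for it is circular. The inclusion $\n^cL\subseteq(\n^{c+1}L:y)$ holds for every $L$, and equality there is precisely the assertion $j\neq c$. From $\n^{c+1}L\subseteq yL$ you only get $\n^{c+1}L=y(\n^{c+1}L:y)$.

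Nor do the Loewy length of $N$ and $\gamma=1$ by themselves bound the reduction number of $L$. Take $L=k[[t^4,t^5,t^{11}]]$ over $B=k[[x,y,z]]$ via $x\mapsto t^5$, $y\mapsto t^4$, $z\mapsto t^{11}$. Then $N=L/t^4L$ has Hilbert function $1,2,1$ and $\gamma=1$ (indeed $e_1(L)=5$ and $e_1(N)=4$). Yet $t^4\cdot t^{11}\in\n^3L$ while $t^{11}\notin\n^2L$, so $\n^3L\neq y\n^2L$ and the defect sits in degree $\ell\ell(N)-1$.

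What your data actually give is only $j\le c$. Since $\lambda(\n^nL/y\n^nL)=e_0(L)$, you have $\lambda(\n^{n+1}L/y\n^nL)=e_0(L)-H_L(n)$, with $e_0(L)=\mu(M)+c$. A defect in degree $j\ge c+1$ would leave $H_L(c)=e_0(L)$. That forces $\n^{n+1}L=y\n^nL$ for all $n\ge c$, so there is no defect above $c$. The remaining case $j=c$, where $h_L=\mu(M)+z+\cdots+z^{c-1}+z^{c+1}$ and $e_2(L)=\binom{c+1}{3}+c$, is exactly the one that would violate the claimed bound, and your proposal does not exclude it.

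You also assert that $B$ is one-dimensional; in fact $\dim B=d-r+1\ge 2$ because $r<d$. If $B$ were one-dimensional, Cohen--Macaulayness of $G(B)$ would give $\n^{c+1}=y\n^c$ in $B$. That would yield $\n^{c+1}L=y\n^cL$ and hence $j\le c-1$ at once. This is in effect the paper's route: it writes $\alpha=s^*(\n,L)-1\le r(\n)-1=\reg G(B)-1=c-1$ using Proposition~\ref{d1}.

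Be aware that Proposition~\ref{d1} assumes $\n$ has a principal reduction, which fails when $\dim B\ge 2$. The example above, where $r(\n)=0$ but $s^*(\n,L)=3$, shows that the inequality $s^*(\n,L)\le r(\n)$ really can fail in that setting. So the reduction number of $B$ alone does not dispose of $j=c$. Neither your proposal nor that citation, as written, supplies the argument needed for that case.
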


We next consider the case where $e_1(M)$ exceeds its lower bound by two. In this case, the inequality $\gamma\leq 2$ gives three possible values for $\gamma$. We analyze each case separately and obtain upper bounds for $e_2(M)$. As in the preceding case, equality in these bounds yields precise information about the depth and the $h$-polynomial of $G(M)$. We prove the following theorem.

\begin{theorem} [Theorem \ref{main}]\label{r3}
    (with the same hypothesis as in Proposition \ref{r1})  If $e_1(M)=\binom{c+1}{2}+2$ then the following hold. 
     \begin{enumerate}
         \item  If $\gamma=0$ then $G(M)$ is Cohen-Macaulay. Furthermore,   $e_2(M)=\binom{c+1}{3}$ with the $h$-polynomial $h_M(z)=\mu(M)+3z+z^2+\ldots +z^c$. 
         \item   If $\gamma=1$ then $e_2(M)\leq \binom{c+1}{3}+(c-1).$ Further, if equality holds, then $\depth G(M)= r-1$ and $h_M(z)=\mu(M)+2z+z^2+\cdots+z^{c-2}+2z^c$.
          \item  If $\gamma=2$ then $e_2(M)\leq \binom{c+1}{3}+2(c-1).$ Further, if equality holds, then $\depth G(M)= r-1$ and $h_M(z)= \mu(M)+z+z^2+\cdots+z^{c-2}+2z^c$.
     \end{enumerate} 
\end{theorem}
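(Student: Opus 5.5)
We reduce to dimension two and read the $h$-polynomial of $N$ and of $L$ off a regularity argument, following the route that gave Theorem \ref{r2}. Pick the $A\oplus M$-superficial sequence $x_1,\ldots,x_{r-2},x,y$ as in the introduction. Then $B$ is Cohen-Macaulay of dimension $d-r+1$, $G(B)$ is Cohen-Macaulay, and $\reg G(B)=c$. The module $L$ is Cohen-Macaulay of dimension one and of finite projective dimension over $B$, and $N=L/yL$ has finite length. Superficial sequences preserve $e_0,e_1$, and also $e_2$ when $r\geq 2$ or after the standard correction. So it suffices to control $h_N(z)$ and the passage from $N$ to $L$ (and then to $M$ by Sally's machine).

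\textbf{Step 1: the form of $h_N$.} By Lemma \ref{lowerbound} applied to $N$, as in the proof of Proposition \ref{reg}, the Hilbert function of $N$ satisfies $\lambda(\n^iN/\n^{i+1}N)\geq 1$ for $0\leq i\leq c$, with $\lambda(N/\n N)=\mu(M)$. This uses finite projective dimension: $\n^c\operatorname{Tor}$-type arguments force $\n^cN\neq 0$, since $\n^{c+1}$ kills no nonzero module of finite projective dimension over the artinian reduction. Hence $e_0(N)=\mu(M)+\sum_{i=1}^{c}h_i(N)$ and
\[
e_1(L)=\sum_i i\,h_i(N)+\gamma\,\text{-correction}, \qquad e_1(M)=e_1(L)=\sum_{i\geq 1}\lambda\bigl(\n^iL/(y\n^{i-1}L)\bigr)\text{-type sum}.
\]
The standard identity is $e_1(L)=\sum_{i\ge0}\lambda(\n^{i+1}L/y\n^iL)$, and this equals $\sum_i i\,h_i(N)+\gamma$ because $\lambda(\n^{i+1}L:y/\n^iL)$ measures the failure of $y^*$ to be regular. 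With $e_1(M)=\binom{c+1}{2}+2$ and each $h_i(N)\ge1$ for $1\le i\le c$, the baseline $\sum_{i=1}^c i=\binom{c+1}2$ leaves an excess of $2-\gamma$ to distribute among the $h_i(N)$. We also need the extra normalization $h_1(N)\ge$ something when socle terms are large, and a small $h_i$ in high degree is impossible beyond $c$ since $\n^{c+1}N=0$. In the case $\gamma=0$ the excess is $2$, and we must show that it sits in degree $1$ ($h_1=3$) rather than as $h_2=2$. I expect this to follow from the inequality $h_i(N)\ge h_{i+1}(N)$ for $i\ge1$, inherited from $G(B)$-structure on the tail, which Proposition \ref{reg} already uses. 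For $\gamma=1$ the excess is $1$, giving $h_1(N)=2$; for $\gamma=2$ it is $0$.

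\textbf{Step 2: $e_2$.} Write $e_2(M)=e_2(L)=\sum_i \binom{i}{2}h_i(N)+\sum_{i\ge1} i\,\lambda(\n^{i+1}L:y/\n^iL)$, which is the standard formula for $e_2$ of a one-dimensional CM module via the Ratliff–Rush/$y$-colon filtration. Let $v_i=\lambda(\n^{i+1}L:y/\n^iL)$, with $\sum v_i=\gamma$. The key claim is that $v_i=0$ for $i\ge c$, since $G(L)$ is generated in degrees $\le c$ modulo $y^*$ by the regularity bound. Hence $\sum i\,v_i\le (c-1)\gamma$. The $\gamma=0$ case gives $G(L)$, hence $G(M)$, Cohen-Macaulay by Sally's descent, and $e_2=\binom{c+1}3$ since $h_1$ contributes nothing. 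For $\gamma=1,2$ we get the stated bounds $\binom{c+1}3+\gamma(c-1)$.

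\textbf{Step 3: equality.} Equality forces all the colon mass into degree $c-1$, i.e. $v_{c-1}=\gamma$. This makes $\depth G(L)=0$ while $\depth G(L/yL)$ trivially bounds everything, so $\depth G(M)=r-1$ by Sally's machine (depth of $G(M)$ drops by exactly the defect at the last step). The $h$-polynomial of $L$ (equal to that of $M$) is then $h_N(z)-(1-z)\sum v_iz^{i+1}$. Subtracting $\gamma(z^{c}-z^{c+1})$ yields the coefficient pattern stated, after checking that $\lambda(\n^cN)$ adjusts so that the top coefficient becomes $2$.

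\textbf{Main obstacle.} I expect the main difficulty to be showing $v_i=0$ for $i\ge c$, equivalently that $v_{c-1}$ is the highest possible slot. I would try to derive it from $\n^{c+1}L\subseteq yL$ via Lemma \ref{kam}'s proof, using that $\reg G(B)=c$ and finite projective dimension.
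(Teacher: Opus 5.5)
\textbf{The outline matches the paper's skeleton, but the central step is unproved and several other steps fail.} You follow the paper's route: pass to $L$ and $N$, use $e_1(L)=e_1(N)+\gamma$ and $e_2(L)=e_2(N)+\sum_i i\,v_i$ with $v_i=\lambda_B\bigl((\n^{i+1}L:y)/\n^iL\bigr)$, then bound $\sum_i i\,v_i\le\gamma(c-1)$.

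\textbf{The bound $v_i=0$ for $i\ge c$.} This carries all the weight in (2) and (3), and your reason for it does not work. $G(L)$ is generated in degree $0$. If you mean that $G(L)/y^*G(L)$ vanishes beyond degree $c$, that is equivalent (by a Nakayama/Krull argument) to $\n^{c+1}L=y\n^cL$, which is the statement to be proved. In cases (2) and (3) one has $\n^{c+1}L\subseteq yL$, so $v_i=\lambda(\n^{i+1}L/y\n^iL)$ for $i\ge c$. Meanwhile $\reg G(B)=c$ only gives $\n^{c+1}=J\n^c$ for a minimal reduction $J$ of $\n$ in $B$; it says nothing about the $y$-reduction number of $L$. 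The paper argues via $\alpha=s^*(\n,L)-1$ and $s^*(\n,L)\le r(\n)=\reg G(B)=c$ (Proposition \ref{d1}, Remark \ref{regmod}, proof of Theorem \ref{main+1}). Be aware, though, that Proposition \ref{d1} assumes a principal reduction, which $\n$ lacks since $\dim B=d-r+1\ge 2$. The inequality $s^*(\n,L)\le r(\n)$ indeed fails in general: for $B=k[[u,v,w]]$ and $L=k[[t^4,t^5,t^{11}]]$ one has $r(\n)=0$ but $t^{11}\in\widetilde{\n^2L}\setminus\n^2L$. So on either route this step needs a genuine argument.

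\textbf{The relation between $e_2(M)$ and $e_2(L)$.} Your claim $e_2(M)=e_2(L)$ is false in general. Lemma \ref{kam} gives only
$e_2(M)=e_2(M_1)=e_2(L)-\sum_i\lambda\bigl((\m_1^{i+1}M_1:x)/\m_1^iM_1\bigr)$.
The inequality suffices for the bounds. In the equality case, however, it is exactly the vanishing of this correction that gives $\depth G(M_1)\ge1$, hence $\depth G(M)\ge r-1$ by Sally's descent (\ref{sally}), and also $h_M=h_L$. Your ``depth drops by exactly the defect'' is not a valid principle: $\gamma>0$ only yields $\depth G(M)\le r-1$.

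\textbf{Step 1.} The monotonicity $h_i(N)\ge h_{i+1}(N)$ is false in general: for $B=k[[u,v]]$, $N=B/\n^3$ has Hilbert function $1,2,3$. It is not used in Proposition \ref{reg}. What excludes $\mu+z+2z^2+z^3+\cdots+z^c$ in case (1) is Lemma \ref{length}.

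Also, $\n^{c+1}N=0$ is asserted, not derived. In case (1) the $e_1$-count forces $\ell\ell(N)=c+1$ only for $c\ge2$, and for $c=1$ part (1) is false. Take $A=k[[a,b,u,v]]/(ab)$ and $M=A/(a+b^2)A\cong k[[b,u,v]]/(b^3)$. Here $a+b^2$ is $A$-regular, $c=1$, and $\m M=(b,u,v)M$. So $G(M)\cong k[T,U,V]/(T^3)$ is Cohen--Macaulay, $h_M(z)=1+z+z^2$, $e_1(M)=3=\binom{2}{2}+2$, and $e_2(M)=1\neq\binom{2}{3}$. The paper's ``exactly two possible $h$-polynomials'' misses this as well, so you must assume $c\ge2$ in (1).

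\textbf{Step 3.} Your formula is shifted. From $\lambda(\n^iL/\n^{i+1}L)=\sum_{j\le i}\lambda(\n^jN/\n^{j+1}N)-v_i$ one gets $h_L(z)=h_N(z)-(1-z)\sum_i v_iz^i$. So equality gives $h_M=h_N-\gamma(1-z)z^{c-1}$.
\begin{itemize}
\item For $\gamma=1$ this is the polynomial in (2) when $c\ge3$.
\item For $\gamma=2$ it is $\mu+z+\cdots+z^{c-2}-z^{c-1}+3z^c$.
\end{itemize}
The polynomial stated in (3) has $e_1=\binom{c+1}{2}+1\ne e_1(M)$, so no ``adjustment of $\lambda(\n^cN)$'' can produce it. The paper's proof writes $h_N(z)-(1-z)z^{c-1}$ at this point, losing the factor $2$.
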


Next, we extend these results to strict complete intersection rings without assuming that $M$ has finite projective dimension. In this setting, we use the complexity of $M$, which measures the asymptotic growth of the Betti numbers of $M$ (see \ref{complexity} for the definition). We establish analogous bounds and structural consequences in this setting and prove the following theorem.

\begin{theorem}[Theorem \ref{ci1}, \ref{ci2}]\label{r4}
     Let  $(Q,\n)$ be a regular local ring with $f_1,\ldots ,f_l\in \n^2$ of order $s$ such that $f_1^*,\ldots ,f_l^* $ is a $G(Q)$-regular sequence.  Let $A=Q/(f_1,\ldots ,f_l)$ and $M$ be Cohen Macaulay $A$-module of dimension $r$ with $\operatorname{cx}_A(M)=t< l$. Let $e_1(M)= \binom{c+1}{2}+i$, where $i=1,2$. Then the following hold.
\begin{enumerate}
    \item If $G(M)$ is Cohen-Macaulay, then $e_2(M)=\binom{c+1}{3}$.
    \item If $i=1$ and $G(M)$ is not Cohen-Macaulay, then $e_2(M)\leq \binom{c+1}{3}+(c-1).$ Further, if equality holds, then $\depth G(M)=r-1.$
    \item If $i=2$ and $G(M)$ is not Cohen-Macaulay, then $e_2(M)\leq \binom{c+1}{3}+2(c-1).$
\end{enumerate}
\end{theorem}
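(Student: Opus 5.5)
The strategy is to reduce the strict complete intersection case to the finite projective dimension case handled in Theorems \ref{r2} and \ref{r3}. Because $\operatorname{cx}_A(M)=t<l$, I plan to use the standard lifting technique for modules of finite complexity over complete intersections. After replacing $A$ by $A[X]_{\m[X]}$ to make the residue field infinite (this changes neither Hilbert coefficients, nor the depth of $G(M)$, nor complexity), I would choose elements $g_1,\dots,g_l$ as general $k$-linear combinations of the $f_j$. These should satisfy three conditions: the initial forms $g_1^*,\dots,g_l^*$ still form a $G(Q)$-regular sequence of degree $s$; the intermediate ring $A'=Q/(g_1,\dots,g_{l-t})$ is a strict complete intersection; and $A=A'/(g_{l-t+1},\dots,g_l)$. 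By the Avramov–Gasharov–Peeva theory of quasi-deformations, or more simply by choosing the $g_i$ generically so that the support variety of $M$ avoids the corresponding point, $M$ has finite projective dimension over $A'$. Since $t<l$, the ring $A'$ is a genuine complete intersection of codimension $l-t\ge 1$, and $G(A')=G(Q)/(g_1^*,\dots,g_{l-t}^*)$ is Cohen-Macaulay.

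Next I would compare invariants. The Hilbert function of $M$ does not depend on whether $M$ is viewed as an $A$-module or an $A'$-module, so $e_i(M)$, $\mu(M)$, $G(M)$ and its depth all agree in the two settings. The key observation is that $c$ must be read as $\reg G(A')=(l-t)(s-1)$, because $G(A')$ is a complete intersection of $l-t$ forms of degree $s$ over a polynomial ring. I take this to be the convention behind the $c$ in the statement. We also need $\dim M=r<\dim A'$, which holds since $\dim A'=\dim A+t>\dim A\ge r$ when $t\ge1$; the case $t=0$ is just the original setting.

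With this set up, the statement follows by applying Theorems \ref{r2} and \ref{r3} to $M$ over $A'$. Part (1): if $G(M)$ is Cohen-Macaulay, then $\gamma=0$. This holds because Cohen-Macaulayness of $G(M)$ makes $y^*$ regular on $G(L)$, so $\n^{i+1}L:y=\n^iL$. Case (1) of each theorem then gives $e_2(M)=\binom{c+1}{3}$. Part (2): if $G(M)$ is not Cohen-Macaulay, then Lemma \ref{kam} gives $1\le\gamma\le 1$, so $\gamma=1$ and Theorem \ref{r2}(2) yields both the bound and the depth statement. Part (3): here $\gamma\in\{1,2\}$, and Theorem \ref{r3}(2),(3) gives $e_2(M)\le\binom{c+1}{3}+\max(1,2)(c-1)$, using $c\ge1$.

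I expect the main obstacle to be the lifting step. The difficulty is finding $g_1,\dots,g_{l-t}$ that simultaneously make $\operatorname{pd}_{A'}M$ finite and keep the initial forms $G(Q)$-regular of the same order $s$. I would first use that a general linear combination of the $f_j$ has initial form equal to the same combination of the $f_j^*$, since all have order $s$. This preserves regularity of the initial forms by a general-position (Zariski-open) argument in $k^l$. The projective-dimension condition should also define a nonempty Zariski-open condition, namely avoiding the support variety of $M$, which has dimension $t<l$. Two nonempty opens in an infinite field's affine space intersect, so one choice satisfies both.
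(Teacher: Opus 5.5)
Your proposal is correct and follows the paper's route. Both arguments pass to the intermediate complete intersection (the paper's $T=Q/(g_{t+1},\dots,g_l)$, your $A'$), over which $M$ has finite projective dimension and $\operatorname{reg}G(T)=(l-t)(s-1)=c$, and then apply Theorems \ref{main+1} and \ref{main}. The differences are minor: the paper quotes the lifting step as a known result rather than deriving it from support varieties, and it transfers an $A\oplus M$-superficial sequence to $T$ instead of choosing one over $A'$ directly; your explicit checks that $G(M)$ is Cohen--Macaulay exactly when $\gamma=0$, and that $c\ge 1$ in part (3), fill in steps the paper leaves implicit.
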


\textbf{Organization of the paper.} This paper is organized into four sections. Section 2 presents the necessary preliminary results. In Section 3, we first prove Proposition \ref{r1}, then establish Lemma \ref{kam}, and subsequently prove Theorems \ref{r2} and \ref{r3}, along with examples supporting the results in the case $\gamma=0$. In Section 4, we prove Theorem \ref{r4} along with examples in the Cohen-Macaulay case.

\section{Preliminaries}
Throughout this paper, all rings considered are Noetherian local rings, and all modules considered (unless stated otherwise) are finitely generated. All the examples discussed in this paper are verified by Macaulay2 \cite{macaulay2}. In this section, we discuss some preliminaries that we use throughout the paper.

\begin{point} Superficial elements \\
     \normalfont
   An element $x\in  \m$ is called $M$-superficial with respect to $\m$ if there exists $c\in \N$ such that for all $n \geq c$, $(\m^{n+1}M :_Mx)\cap \m^cM = \m^nM$. If depth $M>0$ then one can show that a $M$-superficial element is $M$-regular. Furthermore $(\m^{n+1}M :_Mx) = \m^nM$ for $n\gg0.$  If the residue field is infinite then linear superficial elements exist.

    A sequence $x_1, \ldots ,x_r$  in $(A, \m)$ is said to be $M$-superficial sequence if $x_1$ is $M$-superficial and $x_i$ is $M/(x_1,\ldots ,x_{i-1})M$-superficial for $2\leq i \leq r .$ For more details, see \cite[Proposition 8.5.7]{HS}.
\end{point}

\begin{point} Sally's Descent \cite[Theorem 8]{HCCMM}\label{sally} 
\normalfont
$\,$ Let $(A,\m)$ be local ring, $M$ an $A$-module of dimension $d$. Let $x_1,\ldots ,x_r$ is an $M$-superficial sequence with $r< d$. Set $B=A/(x_1,\ldots ,x_r)$ and $N=M/(x_1,\ldots ,x_r)M$.
 Then $$\operatorname{depth }_{G(B)}G(N)\geq 1 \operatorname{ iff } \operatorname{depth }_{G(A)}G(M)\geq r+1.$$
\end{point}

\begin{point}\label{basechange}
Base change \\
\normalfont
   Let $\phi:(A,\mathfrak{m})\to (A',\mathfrak{m}')$ be a local ring homomorphism. Assume $A'$ is a faithfully flat $A$-algebra with $\mathfrak{m}A'=\mathfrak{m}'$. Let $M$ be an $A$-module. Set $M'=M\otimes A'$. In these cases, it can be shown that
\begin{enumerate}
    \item $\lambda_A(M)=\lambda_{A'}(M')$.
    \item $H_M(n)(=\lambda_{A}(\m^nM/\m^{n+1}M))=H_{M'}(n)(=\lambda_{A'}({\m'}^nM'/{\m'}^{n+1}M'))$ for all $n\geq 0$ and hence $H_M(z)=H_{M'}(z).$
    \item dim$_{A}M=$ dim$_{A'}M'$ and $\text{ depth}_A M= \text{ depth}_{A'} M'$.
    \item depth$_{G(A)}G(M)=\text{ depth}_{G(A')} G(M')$.
    \item $A$ is a local complete intersection iff $A'$ is a local complete intersection.
    \item Let $\beta^A_i(M)$ be the $i$th betti number of $M.$ $\beta^A_i(M)=\beta^{A'}_i(M')$. This implies $\operatorname{cx}_A(M)=\operatorname{cx}_{A'}(M')$, where $\operatorname{cx}_A(M)$ is the complexity of $M$ (for definition of complexity see \ref{complexity}).
\end{enumerate}
\end{point}
The specific base changes we do are the following:
\begin{enumerate}
    \item[(i)] $A'=\hat{A}$ the completion of $A$ with respect to the maximal ideal.
    \item[(ii)] $A'=A[[X]]_S$, where $S=A[[X]]\setminus \m A[[X]].$ The maximal ideal of $A'$ is $\m '=\m A'$ and the residue field of $A'$ is $K=k((X)),$ which is uncountable.
\end{enumerate}
In most proofs, we need the residue field to be infinite for the existence of superficial elements, which can be done by the above extension. Therefore, for simplicity,  throughout this paper we assume that the residue field $k$ is infinite.

\begin{point} Castelnuovo-Mumford regularity \label{reg1} \\
    \normalfont
Let $R=\bigoplus_{n\geq 0}R_n$ be a standard $\mathbb{N}$-graded Noetherian ring with $R_0=A$, and let $\mathfrak{M}$ denote its graded maximal ideal. For a finitely generated graded $R$-module $E$, let $H^i_{\mathfrak{M}}(E)$ denote the $i$-th graded local cohomology module of $E$. We set $\operatorname{end}(X)=\max\{n\mid X_n\neq0\},$
with the convention that $\operatorname{end}(0)=-\infty$. Define
$a_i(E)=\operatorname{end}
\left(H^i_{\mathfrak{M}}(E)\right).$
The Castelnuovo-Mumford regularity of $E$ is defined as:
\[
\operatorname{reg} E
=
\max\{a_i(E)+i\mid 0\leq i\leq\dim E\}.
\]
\end{point}

\begin{point}\label{rit}
\normalfont

Let $I$ be an ideal of a ring $A$, and let $M$ be an $A$-module. We consider the ideal $\wt{I^kM} = \bigcup_{n\geq 0} (I^{k+n}M :_M I^n) = (I^{k+n}M :_M I^n)$ for $n \gg 0$. One can readily verify that the companion family of submodules $\{\widetilde{I^nM}\}_{n\geq 0}$ likewise forms an $I$-filtration.
\end{point}

\begin{remark}
\label{rmk:tildeI^nM}
\normalfont

\begin{enumerate}
    \item If $\operatorname{grade}(I, M) > 0$, then $I^nM = \widetilde{I^nM}$ for all $n \gg 0$. Consequently, the family $\{\widetilde{I^nM}\}_{n\geq 0}$ constitutes an $I$-stable filtration (cf.\ \cite[2.2]{tildeI^nM}).
    \item If $\operatorname{grade}(G_I(A)_+, G_I(M)) > 0$, then the equality $I^nM = \widetilde{I^nM}$ holds for all $n \geq 1$ (see \cite[Fact 9]{Heinzer}).
\end{enumerate}
Now define $s^*(I,M)=\text{min}\{ n\, |\, \wt{I^mM}=I^mM \text{ for all }m\geq n\}.$ This is also called as the stability index of $M$ w.r.t. $I.$
\end{remark}

We end this section by recalling the definition of the reduction number and a result which compares the reduction number and the stability index in dimension one.

 Let $(A,\m)$ be a local ring and $I$ an ideal of $A.$ A reduction of $I$ is an ideal $J \subseteq I$ such that $JI^n =I^{n+1}$ for some $n \in N$. If $J$ is a reduction of $I$, the reduction number of $I$ with respect to $J$ is defined as: $$\text{r}_J(I) =\text{min}\{n\, |\, JI^n = I^{n+1}\}.$$ A reduction is minimal if it is minimal with respect to inclusion. The reduction number of $I$ is defined as $\text{r}(I) = \text{min}\{\text{r}_J(I) \, |\, J \text{ is a minimal reduction of }  I\}$.

 \begin{proposition}\cite[Proposition 4.3]{q-hilbert} \label{d1}
      Let $A$ be a local ring, $M$ be an $A$-module, and $I$ an ideal of $A$ having principal reduction such that grade$(I,M)>0$. Then $s^*(I,M)\leq r(I)$.
 \end{proposition}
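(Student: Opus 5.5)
The plan is to argue directly from the definitions. Fix a minimal reduction $J$ of $I$ with $\text{r}_J(I)=\text{r}(I)=:\rho$. Because $I$ has a principal reduction, its analytic spread is $1$. Under the standing assumption that the residue field is infinite, every minimal reduction is then generated by one element, so we may write $J=(x)$. The defining relation $xI^n=I^{n+1}$ for all $n\ge \rho$ iterates to
\[
I^{m+n}=x^nI^m \qquad \text{for all } m\ge \rho,\ n\ge 0 .
\]

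The first step is to show that $x$ is $M$-regular. Suppose instead that $x$ lies in some $P\in \operatorname{Ass}(M)$. Since $I^{\rho+1}=xI^{\rho}\subseteq (x)\subseteq P$, we get $I\subseteq P$. Then every element of $I$ is a zero divisor on $M$, which contradicts $\operatorname{grade}(I,M)>0$.

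The second step is to show that $\widetilde{I^mM}=I^mM$ for every $m\ge \rho$. The inclusion $\supseteq$ is automatic. For the reverse inclusion, let $u\in \widetilde{I^mM}$, so that $uI^n\subseteq I^{m+n}M$ for some $n\gg 0$. In particular
\[
x^nu\in I^{m+n}M=x^nI^mM,
\]
so $x^nu=x^nv$ for some $v\in I^mM$. Since $x^n$ is $M$-regular by the first step, $u=v\in I^mM$. This gives $s^*(I,M)\le \rho=\text{r}(I)$.

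The only delicate point is the reduction to a principal minimal reduction that attains $\text{r}(I)$. That is, we need the minimum defining $\text{r}(I)$ to be taken over reductions that are principal. I would handle this with the infinite residue field assumption, which the paper allows via the base change in \ref{basechange}: minimal reductions are then generated by $\ell(I)=1$ element. If one wanted to avoid this assumption, I would instead run the same argument with any reduction $J=(x)$ and obtain $s^*(I,M)\le \text{r}_J(I)$. The remaining task would then be to compare $\text{r}_J(I)$ with $\text{r}(I)$, and I expect that comparison to be the main obstacle in that generality.
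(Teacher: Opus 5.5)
Your proof is correct, and it is complete under the paper's standing assumption that the residue field is infinite. Under that assumption Northcott--Rees makes every minimal reduction of $I$ principal, so the minimum defining $\text{r}(I)$ is attained by some $(x)$; in fact your argument gives $s^*(I,M)\le \text{r}_{(x)}(I)$ for every principal reduction $(x)$.

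The paper gives no proof of its own here; it simply quotes \cite[Proposition 4.3]{q-hilbert}. Your route is the standard proof of this fact: $x$ is $M$-regular because $I^{\rho+1}=xI^{\rho}\subseteq (x)$, and then you cancel $x^n$ in $x^nu\in I^{m+n}M=x^nI^mM$.
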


\section{Hilbert coefficient of Cohen-Macaulay modules}
 In [\cite{nawaz}, Theorem B], the authors proved that if $(A,\m)$ is a Cohen-Macaulay local ring and $G(A)$ is Cohen Macaulay,  then for all finite length $A$-modules $E$ of finite projective dimension $\ell\ell(E)\geq \operatorname{reg}G(A)+1$ (here $\ell\ell(E)$ denotes the Loewy length of $E$.  Set $c=\operatorname{reg}G(A)$. Using this result, we first generalize the results of (\cite{Quasipure}, Section 4) and then give an upper bound to $e_2(M)$ in terms of Castelnuovo-Mumford regularity.
\begin{point}
\normalfont
Recall that the \emph{Loewy length} of a finite-length $A$-module $E$ is defined by
$$\ell\ell(E)=\min\{i\mid \mathfrak{m}^iE=0\}$$.
Let $G(A)_+$ denote the irrelevant maximal ideal of $G(A)$. For each $i$, let
$H^i(G(A))$ denote the $i$-th local cohomology module of $G(A)$ with respect to
$G(A)_+$. The \emph{Castelnuovo-Mumford regularity} of $G(A)$ is defined as
$$\operatorname{reg}G(A)=\max\{i+j\mid H^i(G(A))_j\neq 0\}$$.

Let $x\in A$. For $x\in \m^r\setminus \m^{r+1}$, set $x^*=x+\m^{r+1}$ as an homogenous element of $G(A)$. We call $x^*$ as the initial form of $x.$
\begin{remark}\label{regmod}
    Let $(A,\m)$ be a Cohen-Macaulay local ring of dimension $d\geq 1$ with $G(A)$ Cohen-Macaulay. Let $x^*$ be a linear $G(A)$-regular element. It can be easily checked that $\operatorname{reg}G(A)=\operatorname{reg}G(A/xA).$ 
\end{remark}
\end{point}

The following result generalizes Lemma 4.3 of \cite{Quasipure}. Its proof follows a similar argument to that of Lemma 4.3 in \cite{Quasipure}, and we include the proof for completeness, as the argument will be used repeatedly in the sequel.

\begin{lemma}
\label{lowerbound}
Let $(A,\mathfrak{m})$ be a Cohen-Macaulay local ring of dimension $d$ such that $G(A)$ is Cohen-Macaulay. Let $M$ be a Cohen-Macaulay $A$-module of dimension
$r<d$ and finite projective dimension. Set
$c=\operatorname{reg} G(A)$. Then
$e_0(M)\geq\mu(M)+c$ and $e_1(M)\geq\binom{c+1}{2}$.
\end{lemma}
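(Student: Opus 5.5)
We reduce to dimension zero and apply the Loewy length bound of \cite[Theorem B]{nawaz}. After the base change of \ref{basechange} we may assume $k$ is infinite. We then choose a sequence $x_1,\ldots,x_r$ that is $A\oplus M$-superficial and whose initial forms $x_1^*,\ldots,x_r^*$ form a $G(A)$-regular sequence of linear forms. This is possible because $G(A)$ is Cohen-Macaulay of dimension $d>r$ and $k$ is infinite.

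Since $M$ is Cohen-Macaulay of dimension $r$, the $x_i$ form an $M$-regular sequence. Set $B=A/(x_1,\ldots,x_r)$ and $E=M/(x_1,\ldots,x_r)M$. Then:
\begin{itemize}
\item $B$ is Cohen-Macaulay with $G(B)\cong G(A)/(x_1^*,\ldots,x_r^*)$ Cohen-Macaulay.
\item $\operatorname{reg}G(B)=c$, by iterating Remark \ref{regmod}.
\item $E$ has finite length and finite projective dimension over $B$, because the $x_i$ are regular on both $A$ and $M$.
\end{itemize}
Theorem B of \cite{nawaz} therefore gives $\ell\ell(E)\geq c+1$, that is, $\mathfrak{m}^cE\neq 0$.

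Now we compare Hilbert functions. For a superficial sequence on the Cohen-Macaulay module $M$ we have $e_0(M)=\lambda(E)$ and $e_1(M)\ge e_1(E)$, the latter read off from the $h$-polynomials with $e_1$ of a finite length module $E$ taken as $\sum_i i\,\lambda(\mathfrak{m}^iE/\mathfrak{m}^{i+1}E)$. More precisely, $h_M(z)$ and $H_E(z)=\sum_i \lambda(\mathfrak{m}^iE/\mathfrak{m}^{i+1}E)z^i$ agree iff $G(M)$ is Cohen-Macaulay. In general $e_0(M)=e_0(E)=\lambda(E)$, while $e_1(M)\ge e_1(E)$; this follows by applying Singh's formula/Huneke–Ooishi type inequality $e_1(M)-e_1(M/xM)\ge 0$ dimension by dimension (for $r\ge 2$ we have $e_1$ preserved, and in dimension one $e_1(M)=\sum_n\lambda(\mathfrak{m}^{n+1}M:x/\mathfrak{m}^nM)+\sum_n \lambda(\mathfrak{m}^nE/\mathfrak{m}^{n+1}E)\cdot n$-type expression, so $e_1(M)\ge \sum_i i\,\lambda(\mathfrak{m}^iE/\mathfrak{m}^{i+1}E)$).

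Next we bound the Hilbert function of $E$. We have $\lambda(E/\mathfrak{m}E)=\mu(M)$, and $\mathfrak{m}^iE\neq 0$ for $i\le c$. By Nakayama, $\mathfrak{m}^iE\ne 0$ forces $\lambda(\mathfrak{m}^iE/\mathfrak{m}^{i+1}E)\ge 1$ for $1\le i\le c$. Hence
\[
e_0(M)=\lambda(E)\ge \mu(M)+c \quad\text{and}\quad e_1(M)\ge \sum_{i=1}^c i=\binom{c+1}{2}.
\]

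The main obstacle is justifying $e_1(M)\ge\sum_i i\,\lambda(\mathfrak{m}^iE/\mathfrak{m}^{i+1}E)$ without assuming $G(M)$ is Cohen-Macaulay. I would first reduce modulo $x_1,\ldots,x_{r-1}$, which preserves $e_0$ and $e_1$ for superficial sequences on a Cohen-Macaulay module. This leaves a one-dimensional Cohen-Macaulay module $L$ with $E=L/x_rL$. There I would use the standard formula
\[
e_1(L)=\sum_{n\ge0}\lambda(\mathfrak{m}^{n+1}L/x_r\mathfrak{m}^nL)-\ldots,
\]
or equivalently
\[
h_L(z)=H_E(z)-(1-z)\sum_n \lambda\bigl((\mathfrak{m}^{n+1}L:x_r)/\mathfrak{m}^nL\bigr)z^{n+1}.
\]
Differentiating at $z=1$ gives $e_1(L)=e_1(E)+\sum_n\lambda\bigl((\mathfrak{m}^{n+1}L:x_r)/\mathfrak{m}^nL\bigr)\ge e_1(E)$, which is exactly the needed inequality.
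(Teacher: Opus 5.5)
Your proposal is correct and follows the paper's argument: cut down by an $A\oplus M$-superficial sequence, apply \cite[Theorem B]{nawaz} to the finite-length module $E$ over $B$, and transfer back via $\mu(M)=\mu(E)$, $e_0(M)=e_0(E)$, $e_1(M)\ge e_1(E)$, which the paper cites from \cite[Corollary 10]{HCCMM} together with Remark~\ref{regmod}; you also usefully make explicit why $E$ has finite projective dimension over $B$ and why $G(B)$ is Cohen--Macaulay with $\operatorname{reg}G(B)=c$, which the paper leaves implicit. The one blemish is an index shift in your one-dimensional formula: the correction term should be $(1-z)\sum_n \lambda\bigl((\mathfrak{m}^{n+1}L:x_r)/\mathfrak{m}^nL\bigr)z^{n}$ rather than $z^{n+1}$, but this is harmless here because it does not change the value of $e_1$ obtained by differentiating at $z=1$.
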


\begin{proof}
Let $x_1,\ldots,x_r\in\mathfrak{m}\setminus\mathfrak{m}^2$ be an
$A\oplus M$-superficial sequence. Set
$B=A/(x_1,\ldots,x_r)$ and $N=M/(x_1,\ldots,x_r)M$.
Then $N$ has finite length. Let $\ell\ell(N)=t$, so that
$ G(N)=\bigoplus_{i=0}^{t-1}\mathfrak{m}^iN/\mathfrak{m}^{i+1}N$.
Set $ a_i=\lambda(\mathfrak{m}^iN/\mathfrak{m}^{i+1}N)$. The Hilbert series of $N$ is
$$H_N(z)=\mu(N)+a_1z+\cdots+a_{t-1}z^{t-1}.$$
Hence

$$e_0(N)=\mu(N)+a_1+\cdots+a_{t-1}
\geq\mu(N)+t-1
\geq\mu(N)+c.$$
The last equality is by Theorem B of \cite{nawaz}. Moreover,
\[
\begin{aligned}
e_1(N)
&=a_1+2a_2+\cdots+(t-1)a_{t-1}\\
&\geq 1+2+\cdots+(t-1)\\
&\geq 1+2+\cdots+c
=\binom{c+1}{2}.
\end{aligned}
\]
The second-to-last inequality is by Theorem B of \cite{nawaz}. By \cite[Corollary 10]{HCCMM} and Remark~\ref{regmod}, we have
$\mu(M)=\mu(N)$, $e_0(M)=e_0(N)$, $e_1(M)\geq e_1(N)$, and $\operatorname{reg} G(A)=\operatorname{reg} G(B)$. Therefore,
$e_0(M)\geq\mu(M)+\operatorname{reg} G(A)$ and
$e_1(M)\geq\binom{c+1}{2}$, as desired.
\end{proof}

The next result is a generalization of Theorem 4.4 of \cite{Quasipure} and discuss about the consequences when $e_1(M)=\binom{c+1}{2}.$
\begin{proposition}
\label{reg}
    (with the same hypothesis as in Lemma \ref{lowerbound}) If $e_1(M)=\binom{c+1}{2}$, then $G(M)$ is Cohen-Macaulay.
\end{proposition}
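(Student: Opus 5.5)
We reduce to the one-dimensional case and use the equality forced in the proof of Lemma \ref{lowerbound}. Choose an $A\oplus M$-superficial sequence $x_1,\ldots,x_r\in\m\setminus\m^2$ as in that proof, and set $B=A/(x_1,\ldots,x_r)$ and $N=M/(x_1,\ldots,x_r)M$. Also set $B'=A/(x_1,\ldots,x_{r-1})$ and $L=M/(x_1,\ldots,x_{r-1})M$, so that $L$ is a one-dimensional Cohen-Macaulay module and $N=L/x_rL$. If $r=0$ the module $G(M)$ has finite length and there is nothing to prove, so we assume $r\geq 1$.

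From the chain of inequalities in the proof of Lemma \ref{lowerbound}, $e_1(M)=\binom{c+1}{2}$ forces every inequality to be an equality. Thus $e_1(M)=e_1(N)$, and also $t-1=c$ and $a_i=1$ for $1\leq i\leq c$. The key input is the standard fact (\cite[Corollary 10]{HCCMM}, or the Huneke--Ooishi type formula) that for a superficial sequence one has $e_1(M)\geq e_1(L)\geq e_1(N)$. The one-dimensional step gives the explicit expression
\[
e_1(L)-e_1(N)=\sum_{n\geq 1}\lambda\bigl((\m^{n+1}L:_L x_r)/\m^nL\bigr),
\]
which equals $\sum_n \lambda(\widetilde{\m^{n}L}/\m^nL)$-type terms. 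Equality $e_1(L)=e_1(N)$ therefore forces $\m^{n+1}L:x_r=\m^nL$ for all $n$. This says $x_r^*$ is $G(L)$-regular, so $\depth G(L)\geq 1$.

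Next we lift along the remaining superficial elements. The quantity $e_1(M)-e_1(L)$ is also nonnegative, and $e_1(M)=e_1(N)$ forces $e_1(M)=e_1(L)$. We then apply Sally's descent (\ref{sally}) with the sequence $x_1,\ldots,x_{r-1}$: since $\depth G(L)\geq1$, we get $\depth G(M)\geq r$, so $G(M)$ is Cohen-Macaulay. Alternatively, we can argue directly with the Hilbert series: $\depth G(L)\geq1$ gives $h_L(z)=h_N(z)=\mu(M)+z+\cdots+z^c$. Then $h_M=h_L$, hence $e_i(M)=e_i(L)$ for all $i$, and this again yields depth via Sally.

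The main obstacle is justifying the exact formula for $e_1(L)-e_1(N)$ in dimension one, namely that $e_1(L)-e_1(N)=\sum_{n\geq0}\lambda\bigl((\m^{n+1}L:x_r)/\m^nL\bigr)$. We would derive it from the exact sequences $0\to (\m^{n+1}L:x)/\m^nL\to L/\m^nL\xrightarrow{x}L/\m^{n+1}L\to N/\m^{n+1}N\to 0$ by summing lengths, using that $x$ is $L$-regular. This is the same computation that defines $\gamma$ in Lemma \ref{kam}, so we expect it to go through cleanly. Once it is in hand, the equality case gives $\gamma=0$, which is exactly $G(L)$-regularity of $x^*$.
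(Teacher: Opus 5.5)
Your proposal is correct and follows essentially the paper's route: pass to the one-dimensional module $L=M/(x_1,\ldots,x_{r-1})M$, note that $e_1(M)=e_1(L)\geq e_1(L/x_rL)\geq\binom{c+1}{2}$ collapses to $e_1(L)=e_1(L/x_rL)$, deduce $\depth G(L)\geq 1$, and lift to $G(M)$ by Sally's descent. The only difference is that you derive the equality criterion directly from the exact sequences $0\to(\m^{n+1}L:x_r)/\m^nL\to L/\m^nL\xrightarrow{x_r}L/\m^{n+1}L\to N/\m^{n+1}N\to 0$ instead of citing \cite[Corollary 10]{HCCMM}, which is fine; however, drop your ``alternative'' argument via $h_M=h_L$, because $h_M=h_L$ is only known once the depth conclusion has already been obtained.
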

\begin{proof}
 Let $x_1,\ldots,x_r\in\mathfrak{m}\setminus\mathfrak{m}^2$ be an $A\oplus M$-superficial sequence, and set
$L=M/(x_1,\ldots,x_{r-1})M$. By \cite[Corollary 10]{HCCMM}, we have $e_1(M)=e_1(L)$ and $e_1(L)\geq e_1(L/x_rL)$. Thus, by the hypothesis,
$$\binom{c+1}{2}=e_1(L)\geq e_1(L/x_rL)\geq\binom{c+1}{2}.$$ It follows that $e_1(L)=e_1(L/x_rL)$. Again, by \cite[Corollary 10]{HCCMM}, $G(L)$ is Cohen-Macaulay. Hence, by Sally's descent (see \ref{sally}), $G(M)$ is Cohen-Macaulay.
\end{proof}

We now give a simple example illustrating the equality case in Theorem~\ref{reg}.

\begin{example}\label{ex1}
\normalfont
Let $A=k[[x,y,z]]/(x^t)$ and $M=A/(y)=k[[x,y,z]]/(x^t,y)$, where $k$ is an infinite field. Then $A$ is Cohen-Macaulay of dimension $2$, while $M$ is Cohen-Macaulay of dimension $1$. Since $y$ is a non-zero-divisor on $A$, $M$ has finite projective dimension over $A$.

Moreover, $G(A)\cong k[X,Y,Z]/(X^t)$ and hence $\operatorname{reg} G(A)=t-1$. On the other hand,
$G(M)\cong k[X,Z]/(X^t)$, and its Hilbert series is
$\displaystyle H_{M}(z)=\frac{1+z+\ldots +z^{t-1}}{(1-z)}$. Thus $h_M(z)=1+z+\ldots +z^{t-1}$ and
$e_1(M)=\binom{t}{2}=\binom{c+1}{2}$, where $c=\operatorname{reg} G(A)=t-1$. Therefore, by Theorem~\ref{reg}, $G(M)$ is Cohen-Macaulay.
\end{example}

\begin{lemma}\label{kam}
   Let $(A,\mathfrak{m})$ be a Cohen-Macaulay local ring and $M$ be a Cohen-Macaulay $A$-module of dimension $r$. Let $x_1,\ldots ,x_{r-2},x,y$ be $A\oplus M$-superficial sequence. Set  $L=M/(x_1,\ldots ,x_{r-2},x)M$, $N=L/yL$, and  $(B,\n)=(A/(x_1,\ldots ,x_{r-2},x),\m/(x_1,\ldots ,x_{r-2},x))$. Then
   
   $$e_2(M)\leq e_2(N) + \sum_{i\geq 1}i\lambda_B\left(\frac{{\n}^{i+1}L:y}{{\n}^iL}\right) $$ Moreover, if equality holds, then depth $G(M)\geq r-1$.
\end{lemma}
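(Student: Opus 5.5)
Reduce to dimension two, then work in dimension one through the standard bookkeeping that compares the Hilbert coefficients of $L$ with those of $N=L/yL$ for a superficial element $y$.

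\textbf{Step 1: reduction to $L$.} Since $x_1,\ldots,x_{r-2}$ is part of an $A\oplus M$-superficial sequence and $M$ is Cohen--Macaulay of dimension $r\geq 2$, \cite[Corollary 10]{HCCMM} gives $e_2(M)\leq e_2(M')$ for $M'=M/(x_1,\ldots,x_{r-2})M$. Equality holds if and only if $\depth G(M)\ge r-1$, by Sally's descent \ref{sally}, which propagates depth information along the superficial sequence. So it suffices to work with the two-dimensional module $M'$ and the superficial sequence $x,y$.

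\textbf{Step 2: the one-dimensional formula for $L$.} $L=M'/xM'$ is a one-dimensional Cohen--Macaulay module over $B$, and $y$ is $L$-regular and superficial. The exact sequences
$$0\to \frac{\n^{i+1}L:y}{\n^iL}\to \frac{L}{\n^iL}\xrightarrow{\;y\;}\frac{L}{\n^{i+1}L}\to \frac{L}{\n^{i+1}L+yL}\to 0$$
give
$$\lambda(\n^iN/\n^{i+1}N)=\lambda(\n^iL/\n^{i+1}L)-\lambda\left(\frac{\n^{i+1}L:y}{\n^iL}\right)+\lambda\left(\frac{\n^{i}L:y}{\n^{i-1}L}\right).$$
Set $v_i=\lambda((\n^{i+1}L:y)/\n^iL)$. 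Summing with generating functions yields
$$h_L(z)=h_N(z)+(1-z)\sum_{i\ge 0}v_iz^i.$$
Note $v_0=0$, since $\n L:y=\n L$ holds by superficiality in degree zero. Differentiating and evaluating at $z=1$ gives $e_1(L)=e_1(N)+\sum v_i$ and $e_2(L)=e_2(N)+\sum_{i\ge1} i\,v_i$. This is the Huckaba--Marley-type identity in dimension one.

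\textbf{Step 3: from $L$ back to $M'$.} By the standard two-dimensional comparison \cite[Corollary 10]{HCCMM}, $e_2(M')\le e_2(L)$, with equality if and only if $\depth G(M')\ge1$. Combining this with Step 2 gives
$$e_2(M)\le e_2(M')\le e_2(L)=e_2(N)+\sum_{i\ge1} i\,\lambda_B\!\left(\frac{\n^{i+1}L:y}{\n^iL}\right).$$

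\textbf{Step 4: the equality case.} If equality holds, then both inequalities in Step 3 are equalities. Then $e_2(M')=e_2(L)$ gives $\depth G(M')\geq 1$, and $e_2(M)=e_2(M')$ transfers this along $x_1,\ldots,x_{r-2}$. Sally's descent \ref{sally} then gives $\depth G(M)\ge (r-2)+1=r-1$.

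\textbf{Main obstacle.} The delicate point is the characterization of equality in $e_2(M')\le e_2(L)$. I would derive it from the formula $e_2(M')-e_2(L)=-\sum_n \lambda\bigl((\m^{n+1}M':x)/\m^nM'\bigr)$ (with appropriate indexing), in the version of \cite[Corollary 10]{HCCMM}. Equality forces $\m^{n+1}M':x=\m^nM'$ for all $n$, so $x^*$ is $G(M')$-regular. I expect the indexing conventions in that formula to be where care is needed.
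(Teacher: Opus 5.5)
This is the paper's argument: pass to $M'=M/(x_1,\ldots,x_{r-2})M$, use $e_2(M')=e_2(L)-\sum_n\lambda\bigl((\m^{n+1}M':x)/\m^nM'\bigr)$, combine it with the one-dimensional identity $e_2(L)=e_2(N)+\sum_i i\,v_i$, and in the equality case get $\depth G(M')\geq 1$ and apply Sally's descent. The skeleton is sound, and the final values you state are right. However, several intermediate statements are false and need correcting.

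In Step 2, the exact sequence gives $\lambda(N/\n^{i+1}N)=\lambda(\n^iL/\n^{i+1}L)+v_i$. Hence $H_N(z)/(1-z)=H_L(z)+\sum_i v_iz^i$, that is, $h_L(z)=h_N(z)+(z-1)\sum_i v_iz^i$ (the paper's $(z-1)s(z)$), not $h_N(z)+(1-z)\sum_i v_iz^i$. With your sign, differentiating gives $e_1(L)=e_1(N)-\sum_i v_i$ and $e_2(L)=e_2(N)-\sum_i i\,v_i$. That is the opposite of what you state, and this sign is exactly what the inequality depends on.

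Your displayed formula for $\lambda(\n^iN/\n^{i+1}N)$ is also wrong. For $L=B=k[[t]]$ and $y=t$ it predicts the value $1$ for every $i\geq 1$, whereas the true value is $0$. The correct formula is $\lambda(\n^iL/\n^{i+1}L)-\lambda(\n^{i-1}L/\n^iL)+v_i-v_{i-1}$. Also, $v_0=0$ simply because $\n L:y=L$ (since $y\in\n$), not $\n L$.

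In Step 1, $e_2(M)=e_2(M')$ holds unconditionally. Each $x_j$ is a superficial nonzerodivisor on a module of dimension $k\geq 3$, and the $h$-polynomial changes by $(1-z)^k$ times a power series, which does not affect $e_2$. So your claim that equality there is equivalent to $\depth G(M)\geq r-1$ is false. This does not damage the proof: in Step 4 the depth transfer is done by Sally's descent from $\depth G(M')\geq 1$, exactly as in the paper.
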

\begin{proof}
  Set $ s(z)=\sum_{i\geq 0}\lambda_B \left(\dfrac{\n^{i+1}L:y}{\n^iL}\right)z^i$. Note that $\dim(L)=1.$ By (\cite{HCCMM}, Corollary 10(3)), we have 
    \begin{equation}\label{eq12}
        \begin{split}
           & h_{L}(z)=h_{N}(z)+(z-1)s(z)\\
           \implies &  h^{(1)}_L(z)=h^{(1)}_{N}(z)+(z-1)s^{(1)}(z)+s(z)\\
           \implies & h^{(2)}_{L}(z)=h^{(2)}_{N}(z)+(z-1)s^{(2)}(z)+2s^{(1)}(z)\\
           \implies & e_2(L)=e_2(N)+s^{(1)}(1)=e_2(N)+\sum_{i\geq 1}i\lambda_B \left(\dfrac{\n^{i+1}L:y}{\n^iL}\right).
        \end{split}
    \end{equation}
Set  $M_1=M/(x_1,\ldots ,x_{r-2})M$, $(A_1,\m_1)=(A/(x_1,\ldots ,x_{r-2}),\m/(x_1,\ldots ,x_{r-2}))$, and  $$ r(z)=\sum_{i\geq 0}\lambda_{A_1} \left(\dfrac{\m^{i+1}_1M_1:x}{\m^i_1M_1}\right)z^i.$$ Note that $\dim(M_1)=2.$ By (\cite{HCCMM}, Corollary 10(5,6)), we have 

\begin{equation}
    \begin{split}
        e_2(M)=e_2(M_1) &=e_2(L)-r(1)\\ & = e_2(N)+\sum_{i\geq 1}i\lambda_B \left(\dfrac{\n^{i+1}L:y}{\n^iL}\right)-r(1) \\ & \leq e_2(N)+\sum_{i\geq 1}i\lambda_B \left(\dfrac{\n^{i+1}L:y}{\n^iL}\right).
    \end{split}
\end{equation}
Furthermore, if equality holds, then $r(1)=0$. Consequently, $(\mathfrak{m}^{i+1}_1M_1:x)=\mathfrak{m}^i_1M_1$ for all $i\geq1$. Therefore $\operatorname{depth}G(M_1)\geq1$ and hence by sally's descent (see \ref{sally}), $\depth G(M)\geq r-1.$ 

\end{proof}

The next result examines the consequences of the next-to-extremal cases, where $e_1(M)=\binom{c+1}{2}+s$ and $s\geq 1$.

\begin{lemma}\label{implemma}
     Let $(A,\m)$ be a Cohen-Macaulay local ring of dimension $d$ with $G(A)$ Cohen-Macaulay. Let $M$ be a Cohen-Macaulay $A$-module of dimension $r<d$ with finite projective dimension. Let $x_1,\ldots ,x_{r-2},x,y$ be $A\oplus M$-superficial sequence. Set $c=\reg G(A)$, $L=M/(x_1,\ldots ,x_{r-2},x)M$, $N=L/yL$ and  $(B,\n)=(A/(x_1,\ldots ,x_{r-2},x),\m/(x_1,\ldots ,x_{r-2},x))$. If $e_1(M)=\binom{c+1}{2}+s$ then $$ \gamma=\sum_{i\geq 1} \lambda_B \left(\dfrac{\n^{i+1}L:y}{\n^iL }\right)\leq s.$$
\end{lemma}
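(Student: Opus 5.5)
We reduce to the one-dimensional module $L$ and compare $e_1(L)$ with $e_1(N)$ via the identity used in the proof of Lemma \ref{kam}. With $s(z)=\sum_{i\geq 0}\lambda_B\left(\frac{\n^{i+1}L:y}{\n^iL}\right)z^i$, \cite[Corollary 10(3)]{HCCMM} gives
$$h_L(z)=h_N(z)+(z-1)s(z).$$
Differentiating once and evaluating at $z=1$ yields $e_1(L)=e_1(N)+s(1)$. The $i=0$ term of $s(z)$ is $\lambda_B\big((\n L:y)/L\big)=0$, since $\n L:y\subseteq L$ trivially. Hence
$$s(1)=\sum_{i\geq 1}\lambda_B\left(\frac{\n^{i+1}L:y}{\n^iL}\right)=\gamma,$$
and so $e_1(L)=e_1(N)+\gamma$. (We should make sure that the colon in the $i=0$ term is the quotient of submodules of $L$ as in \cite{HCCMM}; if it is interpreted instead as $\n L:_L y=L$, the term is again zero.)

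Next we relate both ends to known quantities.

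\textbf{Comparing $M$ with $L$.} By \cite[Corollary 10]{HCCMM}, passing modulo the superficial sequence $x_1,\ldots,x_{r-2},x$ does not increase $e_1$, so $e_1(M)\geq e_1(L)$. In fact the first $r-1$ elements preserve $e_1$ exactly, as used in the proof of Proposition \ref{reg}. Therefore $e_1(L)\leq \binom{c+1}{2}+s$.

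\textbf{Bounding $N$ from below.} The module $N=M/(x_1,\ldots,x_{r-2},x,y)M$ is a finite-length module over $C=A/(x_1,\ldots,x_{r-2},x,y)$. The sequence is $A$-superficial with linear initial forms, and $G(A)$ is Cohen-Macaulay, so these initial forms form a $G(A)$-regular sequence. Applying Remark \ref{regmod} repeatedly then gives $\reg G(C)=c$ and $G(C)$ Cohen-Macaulay. Since $M$ is Cohen-Macaulay of finite projective dimension and the $x_i,y$ are $A\oplus M$-regular, $N$ has finite projective dimension over $C$. The Loewy-length argument in the proof of Lemma \ref{lowerbound}, namely Theorem B of \cite{nawaz}, therefore gives $e_1(N)\geq\binom{c+1}{2}$.

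Combining these,
$$\binom{c+1}{2}+\gamma\leq e_1(N)+\gamma=e_1(L)\leq e_1(M)=\binom{c+1}{2}+s,$$
so $\gamma\leq s$.

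The main point needing care is the middle step: checking that finite projective dimension and the regularity of $G$ descend to $C$, and that $C$ is Cohen-Macaulay of positive dimension. Here $r<d$ ensures $\dim C=d-r>0$, which is exactly the setting already used in Lemma \ref{lowerbound}. In fact that lemma's proof establishes $e_1(N)\geq\binom{c+1}{2}$ directly for this $N$, so we can simply cite it.
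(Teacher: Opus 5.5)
Your proof is correct and takes essentially the same route as the paper: $e_1(M)=e_1(L)=e_1(N)+\gamma$ via \cite[Corollary 10]{HCCMM}, combined with $e_1(N)\geq\binom{c+1}{2}$ from the Loewy-length argument in the proof of Lemma \ref{lowerbound}. Your additional checks (that the $i=0$ term vanishes, and that finite projective dimension and $\operatorname{reg} G$ descend to $C$) make explicit what the paper leaves implicit.
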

\begin{proof}
     By (\cite{HCCMM}, Corollary 10), we have $e_1(M)=e_1(L)$ and $ e_1(L)=e_1(N)+\sum_{i\geq 1}\lambda_B \left(\dfrac{\n^{i+1}L:y}{\n^iL}\right)$. Set $c=\reg G(A).$ Now, from the proof of Lemma \ref{lowerbound}, we obtain
    \begin{equation*}
        \begin{split}
            \binom{c+1}{2}+s=e_1(M)=e_1(L)&=  e_1(N)+\sum_{i\geq 1}\lambda_B \left(\dfrac{\n^{i+1}L:y}{\n^i}L\right) \\ & \geq \binom{c+1}{2}+\sum_{i\geq 1}\lambda_B \left(\dfrac{\n^{i+1}L:y}{\n^iL}\right).
        \end{split}
    \end{equation*}
This proves the result.% This implies that $$\gamma = \sum_{i\geq 1}\lambda_B \left(\dfrac{\n^{i+1}L:y}{\n^iL}\right)\leq s.$$
\end{proof}

We now give an example illustrating the hypothesis of Lemma~\ref{implemma}.
\begin{example}
    \normalfont
Let $A=k[[x,y,z]]/(x^2)$ and  $M_1=A/(y)=k[[x,y,z]]/(x^2,y)$, where $k$ is an infinite field. Set $M=M_1^{s+1}$. By Example~\ref{ex1}, $M_1$ has finite projective dimension over $A$, and hence so does $M$. Moreover, $M$ is a Cohen-Macaulay $A$-module of dimension $1$.

We have $G(A)\cong k[X,Y,Z]/(X^2)$ with $\operatorname{reg} G(A)=1$ and $G(M_1)\cong k[X,Z]/(X^2)$. Since $G(M)=G(M_1)^{s+1}$ and the Hilbert series is additive with respect to direct sums, we have
$\displaystyle H_M(z)=(s+1)H_{M_1}(z)=\frac{(s+1){(1+z)}}{(1-z)}$. Hence, $h_M(z)=(s+1)(1+z)$, and therefore $e_1(M)=s+1=\binom{2}{2}+s=\binom{c+1}{2}+s$.
\end{example}

We now investigate the structure arising in the case $e_1(M)=\binom{c+1}{2}+1$.

\begin{theorem}\label{main+1}
    (with the same hypothesis as in Lemma \ref{implemma}) If $e_1(M)=\binom{c+1}{2}+1$ then the following hold. 
     \begin{enumerate}
         \item \label{1+11} If $\gamma=0$ then $G(M)$ is Cohen-Macaulay. Moreover, $e_2(M)=\binom{c+1}{3}$ with the $h$-polynomial $h_M(z) = \mu(M)+2z+z^2+\ldots+ z^{c}.$
         \item \label{1+12} If $\gamma=1$ then $e_2(M)\leq \binom{c+1}{3}+(c-1).$ Further, if equality holds, then $\depth G(M)= r-1$ and $h_M(z)=\mu(M)+z+\ldots+z^{c-2}+2z^{c}$.
     \end{enumerate} 
\end{theorem}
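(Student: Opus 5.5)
The plan is to reduce everything to the zero-dimensional module $N$ and the one-dimensional module $L$, and to combine the Loewy length bound from the proof of Lemma \ref{lowerbound} with Lemma \ref{kam}. Write $h_N(z)=\mu(N)+a_1z+\cdots+a_{t-1}z^{t-1}$ with $t=\ell\ell(N)\geq c+1$ and all $a_i\geq 1$ for $1\leq i\leq t-1$. We have $\mu(N)=\mu(M)$. By \cite[Corollary 10]{HCCMM}, $e_1(M)=e_1(L)=e_1(N)+\gamma$.

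\textbf{Case $\gamma=0$.} Here $e_1(N)=e_1(L)$, so $G(L)$ is Cohen-Macaulay by \cite[Corollary 10]{HCCMM}, and hence $G(M)$ is Cohen-Macaulay by Sally's descent \ref{sally}. Then $h_M=h_L=h_N$ and $e_1(N)=\sum ia_i=\binom{c+1}{2}+1$. Suppose $t\geq c+2$. Then $\sum ia_i\geq\binom{c+2}{2}>\binom{c+1}{2}+1$ once $c\geq 1$, a contradiction. So $t=c+1$, and the excess $\sum i(a_i-1)=1$ forces $a_1=2$ and $a_i=1$ for $2\leq i\leq c$. This gives the stated $h$-polynomial. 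Then $e_2=\sum\binom{i}{2}a_i=\binom{c+1}{3}$. (The degenerate case $c=0$ is handled separately and trivially.)

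\textbf{Case $\gamma=1$.} Now $e_1(N)=\binom{c+1}{2}$, so as above $t=c+1$ and $a_i=1$ for all $i$. Thus $h_N=\mu+z+\cdots+z^c$ and $e_2(N)=\binom{c+1}{3}$. Exactly one index $j\geq1$ has $\lambda_B((\n^{j+1}L:y)/\n^jL)=1$, and the index-$0$ term of $s(z)$ vanishes because $y$ is $L$-regular. So $s(z)=z^j$ and Lemma \ref{kam} gives $e_2(M)\leq\binom{c+1}{3}+j$.

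The key point, and the expected main obstacle, is to show $j\leq c-1$. I would argue that $\n^{j+1}L:y\neq\n^jL$ forces $\n^{j+1}L\neq y\n^jL$, because $\n^{j+1}L=y\n^jL$ would give $\n^{j+2}L:y=\n^{j+1}L$ from then on. Comparing lengths, I would show that a nonzero quotient in degree $j$ requires $\n^{j+1}N\neq 0$ in a way compatible with $s^*(\n,L)\leq r(\n_L)$ (Proposition \ref{d1}). Alternatively, use $h_L=h_N+(z-1)z^j$: the $h$-polynomial of the one-dimensional Cohen-Macaulay $L$ must have nonnegative coefficients wherever the Hilbert function of $L$ is nondecreasing. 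If $j=c$, the coefficient of $z^c$ becomes $0$ while that of $z^{c+1}$ is $1$. I would rule this out via the fact that $\lambda(\n^nL/\n^{n+1}L)$ is nondecreasing and stabilizes at $e_0$, so $h_L$ can have no positive coefficient after a zero one. If $j>c$, the coefficient of $z^j$ is $-1$, which I would rule out similarly. The case $j=c$ is the delicate one, and I would first attempt the Hilbert function monotonicity argument for it.

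\textbf{Equality.} If equality $e_2(M)=\binom{c+1}{3}+c-1$ holds, then $j=c-1$ and equality holds in Lemma \ref{kam}, so $\depth G(M)\geq r-1$ and $r(1)=0$. Hence $h_M=h_L=h_N+(z-1)z^{c-1}=\mu+z+\cdots+z^{c-2}+2z^c$. Finally, $G(M)$ is not Cohen-Macaulay, since otherwise $\gamma=0$ by \cite[Corollary 10]{HCCMM}. Therefore $\depth G(M)=r-1$.
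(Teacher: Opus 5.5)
Everything outside the bound $j\le c-1$ matches the paper: the case $\gamma=0$, the identification $h_N(z)=\mu(M)+z+\cdots+z^c$ when $\gamma=1$, the estimate $e_2(M)\le\binom{c+1}{3}+j$ from Lemma~\ref{kam}, and the equality analysis. The step you flag as the main obstacle is, however, a genuine gap, and neither of your routes excludes $j=c$.

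If $j=c$, then $h_L(z)=\mu(M)+z+\cdots+z^{c-1}+z^{c+1}$. So $\lambda(\n^nL/\n^{n+1}L)$ runs through $\mu(M),\mu(M)+1,\ldots,\mu(M)+c-1,\mu(M)+c-1,\mu(M)+c,\mu(M)+c,\ldots$. This is nondecreasing, bounded by $e_0(L)$, and eventually equal to it, so your monotonicity argument gives no contradiction. A nondecreasing function may stall and then rise, so ``no positive coefficient after a zero one'' does not follow. Moreover, monotonicity of Hilbert functions of one-dimensional Cohen--Macaulay modules fails in general, already for rings.

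Your length comparison stalls at the same place. The condition $\n^{c+1}N=0$ only gives $(\n^{c+1}L:y)/\n^cL\cong \n^{c+1}L/y\n^cL$, whose length is $e_0(L)-\lambda(\n^cL/\n^{c+1}L)$, and this equals $1$ in the configuration above. In fact all the numerical data of $N$ and $L$ are consistent with $j=c$.

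What your method does prove is weaker. The case $j>c$ can be excluded correctly: since $\lambda(\n^{n+1}L/y\n^nL)=e_0(L)-\lambda(\n^nL/\n^{n+1}L)$, the value $\lambda(\n^cL/\n^{c+1}L)=e_0(L)$ forces $\n^{m+1}L=y\n^mL$, hence $\n^{m+1}L:y=\n^mL$, for all $m\ge c$. This yields only $j\le c$, i.e.\ $e_2(M)\le\binom{c+1}{3}+c$.

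To rule out $j=c$ you need genuinely ring-theoretic input from the hypothesis that $G(A)$, hence $G(B)$, is Cohen--Macaulay with $\reg G(B)=c$. Concretely, you need $\n^{c+1}L=y\n^cL$, equivalently $s^*(\n,L)\le c$. The paper gets this by writing $j=s^*(\n,L)-1$ and invoking Proposition~\ref{d1} with $r(\n)=\reg G(B)=c$ (Remark~\ref{regmod}).

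If you adopt that step, be careful. Since $\dim B=d-r+1\ge 2$, the ideal $\n$ has no principal reduction, so Proposition~\ref{d1} does not apply to it as stated. In fact $s^*(\n,L)\le r(\n)$ fails in general for a one-dimensional $L$ over a base of dimension at least $2$. For example, take $B=k[[a,b,c]]$, where $r(\n)=0$, and $L=k[[t^4,t^5,t^{11}]]$ viewed as a $B$-module via $a,b,c\mapsto t^4,t^5,t^{11}$; its associated graded ring is not Cohen--Macaulay. So along either route, what remains to be proved in the present situation is $\n^{c+1}L=y\n^cL$. This has to come from the hypotheses on $A$ and $M$, not from Hilbert-function bookkeeping.
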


\begin{proof}

  (\ref{1+11}) If $\gamma=0$, then $\mathfrak{n}^{s+1}L:y=\mathfrak{n}^sL$ for all $s\geq1$. It follows that $\operatorname{depth}G(L)\geq1$, and hence, by Sally's descent (see \ref{sally}), $G(M)$ is Cohen-Macaulay. Thus, $h_M(z)=h_N(z)$ (see \cite[Theorem 8, Corollary 11]{HCCMM}). Consequently, $e_1(N)=\binom{c+1}{2}+1$. Therefore, by the Lemma \ref{lowerbound}, the only possible case is $a_1=2$ and $a_i=1$ for all $1<i\leq c=t-1$. Hence, $h_M(z)=h_N(z)=\mu(M)+2z+z^2+\cdots+z^c$. Therefore, $$e_2(M)=\frac{h_N^{(2)}(1)}{2}=\frac{1}{2}\big(1\cdot2+2\cdot3+\cdots+(c-1)c\big)=\binom{c+1}{3}$$.

\noindent
  (\ref{1+12}) If $\gamma= 1$ then $G(L)$ is not Cohen-Macaulay. Therefore, $G(M)$ is not Cohen-Macaulay and hence $e_1(M)\neq e_1(N)$ (see \cite[Corollary 10]{HCCMM}). It follows from the proof of Lemma \ref{lowerbound} that $e_1(N)=\binom{c+1}{2}.$ Thus, the only possible case is $a_i=1$ for all $1\leq i\leq c=t-1.$ Therefore, $h_N(z)=\mu(M)+z+\ldots+ z^{c}$ and hence $e_2(N)=\binom{c+1}{3}.$

Suppose for $i=\alpha$, $\lambda_B \left(\dfrac{\n^{i+1}L:y}{\n^iL}\right)=1.$ By (\cite{Zulfeqarr}, 8.3), we have $s^*(\n,L)=\min\{s\,|\,(\n^{m+1}L:y)=\n^mL \text{ for all }m\geq s\}.$ Thus, $\alpha=s^*(\n,L)-1.$
From Lemma \ref{kam}, we obtain

\begin{align*}
      e_2(M) & \leq e_2(N) + \sum_{i\geq 1}i\lambda_B\left(\frac{{\n}^{i+1}L:y}{{\n}^iL}\right)\\ & = e_2(N) + \alpha \\ & =e_2(N)+s^*(\n,L)-1 \\ & \leq e_2(N)+r(\n)-1 & \text{ (By Proposition \ref{d1})} \\ & = e_2(N)+ \text{reg }G(B)-1 \\& = e_2(N)+ c-1 & \text{ (By Remark \ref{regmod})}\\ &= \binom{c+1}{3}+c-1,
\end{align*}

where the equality $r(\mathfrak{n})=\operatorname{reg} G(B)$ follows from the Cohen-Macaulayness of $G(A)$. By Lemma~\ref{kam}, equality implies that $\operatorname{depth}G(M)\geq r-1$. Since $\alpha\neq0$, we obtain $\operatorname{depth}G(M)=r-1$. Moreover, if equality holds, then $\alpha=c-1$. It follows that
$h_M(z)=h_L(z)=h_N(z)-(1-z)z^{c-1}
=\mu(M)+z+\cdots+z^{c-2}+2z^c$.
This proves the result.
\end{proof}

We now give an example in which equality holds in Theorem~\ref{main+1}(1).

\begin{example}
\normalfont
Let $A=k[[x,y,z,w,u]]/(x^2,y^2)$, where $k$ is an infinite field, and $M=A/(z)$. By Example~\ref{ex1}, $M$ has finite projective dimension over $A$. Moreover, $M$ is a Cohen-Macaulay $A$-module of dimension $2$. We have $G(A)\cong k[X,Y,Z,W,U]/(X^2,Y^2)$ with $c=\operatorname{reg} G(A)=2$, and
$G(M)\cong k[X,Y,W,U]/(X^2,Y^2)$. The Hilbert series of $M$ is
$\displaystyle H_M(z)=\frac{(1+z)^2}{1-z}$. Hence, $h_M(z)=1+2z+z^2$, and therefore
$e_1(M)=4=\binom{3}{2}+1=\binom{c+1}{2}+1$ and
$e_2(M)=1=\binom{c+1}{3}$.
\end{example}

We next turn to the case $e_1(M)=\binom{c+1}{2}+2$ and investigate its consequences. We need the following result to prove the next theorem.

\begin{lemma}\label{length}
Let $(A,\m,k)$ be a local ring and let $E$ be a finitely generated $A$-module. Then $$\lambda_A({\m E}/{\m^2 E})=1\implies \lambda_A({\m^2 E}/{\m^{3} E})\leq 1.$$
\end{lemma}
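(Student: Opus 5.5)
The plan is to reduce to a principal-ideal situation via Nakayama's lemma. Suppose $\lambda_A(\m E/\m^2E)=1$. Since $\m E/\m^2E$ is a one-dimensional $k$-vector space, Nakayama's lemma applied to the finitely generated module $\m E$ shows that $\m E$ is cyclic. Hence $\m E=Au$ for a single element $u\in\m E$.

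Next we compute $\m^2E=\m(\m E)=\m u$ and $\m^3E=\m^2u$. The map $A\to Au$, $a\mapsto au$, is surjective, so it induces a surjection
\[
\m/\m^2\longrightarrow \m u/\m^2u=\m^2E/\m^3E .
\]
This alone only gives the weak bound $\lambda(\m^2E/\m^3E)\leq \mu(\m)$, so a finer argument is needed.

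For the sharper bound, use that $\m u\subseteq \m E=Au$. Thus every $a\in\m$ satisfies $au=b_a u$ for some $b_a\in A$, and in fact $b_a\in\m$. Otherwise $b_a$ is a unit and $u=b_a^{-1}au\in\m u$, so Nakayama gives $u=0$. In that case $\m E=0$, contradicting the hypothesis.

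It is cleaner to work with $\m E/\m^2E$ directly. The key step, and the main obstacle, is to show that multiplication by $\m$ on the cyclic module $\m E$ produces at most a one-dimensional space modulo $\m^3E$. We pick $x\in\m$ with $xE\not\subseteq\m^2E$, which exists since $\m E\neq\m^2E$. Then $\m E=xE+\m^2E$, and by Nakayama $\m E=xE$, since $\m E$ is finitely generated and $\m^2E=\m(\m E)$.

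From $\m E=xE$ we get $\m^2E=\m xE=x\m E=x^2E$ and, in the same way, $\m^3E=x^3E$. Now $\m^2E/\m^3E=x^2E/x^3E$ is the image of $\m E/\m^2E=xE/x^2E$ under multiplication by $x$. This multiplication is a surjective $A$-linear map $xE/x^2E\to x^2E/x^3E$, so $\lambda(\m^2E/\m^3E)\leq\lambda(\m E/\m^2E)=1$. This alternative route via the element $x$ avoids the cyclic-generator bookkeeping entirely, and I would write the proof this way.
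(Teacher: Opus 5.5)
Your final argument is correct and is essentially the paper's proof. Both choose $x\in\m$ with $xE\not\subseteq\m^2E$ (the paper phrases this as $xu\notin\m^2E$) and show that multiplication by $x$ maps $\m E/\m^2E$ onto $\m^2E/\m^3E$; you first use Nakayama to get the exact equality $\m E=xE$ (hence $\m^nE=x^nE$), whereas the paper works modulo $\m^3E$ with the explicit generator $x^2u$, and your earlier detour through a cyclic generator of $\m E$ and the observation $b_a\in\m$ can simply be dropped.
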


\begin{proof}
Choose $x\in \m$ and $u\in E$ such that
$xu\notin \m^2E$. Since $\lambda_A({\m E}/{\m^2 E})=1$, we have $\m E=A(xu)+\m^2E.$ Hence $\m^2E=\mathfrak m(xu)+\m^3E.$ For any $y\in\m$, write $yu=a.xu+v$ for some $a\in A$ and $v\in\m^2E$. This implies $y(xu)=a.x^2u+xv\equiv a.x^2u \pmod{\m^3E}$. Thus $\m^2E/\m^3E$ is generated, as a $k$-vector space, by the class of $x^2u$. Therefore $\lambda_A({\m^2E}/{\m^3E})\leq 1.$
\end{proof}

\begin{theorem}\label{main}
    (with the same hypothesis as in Lemma \ref{implemma}) If $e_1(M)=\binom{c+1}{2}+2$ then the following hold. 
     \begin{enumerate}
         \item \label{2+11} If $\gamma=0$ then $G(M)$ is Cohen-Macaulay. Furthermore,   $e_2(M)=\binom{c+1}{3}$ with the $h$-polynomial $h_M(z)=\mu(M)+3z+z^2+\ldots +z^c$. 
         \item \label{2+12}  If $\gamma=1$ then $e_2(M)\leq \binom{c+1}{3}+(c-1).$ Further, if equality holds, then $\depth G(M)= r-1$ and $h_M(z)=\mu(M)+2z+z^2+\cdots+z^{c-2}+2z^c$.
          \item \label{2+13} If $\gamma=2$ then $e_2(M)\leq \binom{c+1}{3}+2(c-1).$ Further, if equality holds, then $\depth G(M)= r-1$ and $h_M(z)= \mu(M)+z+z^2+\cdots+z^{c-2}+2z^c$.
     \end{enumerate} 
\end{theorem}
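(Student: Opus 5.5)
The plan is to follow the proof of Theorem~\ref{main+1}, now with the three cases $\gamma=0,1,2$ permitted by Lemma~\ref{implemma}. The common starting point is the identity
\[
e_1(M)=e_1(L)=e_1(N)+\gamma .
\]
We use it together with the structure of $h_N(z)=\mu(M)+a_1z+\cdots+a_{t-1}z^{t-1}$ from the proof of Lemma~\ref{lowerbound}. There $t-1\geq c$ and every $a_i\geq 1$, so $e_1(N)=\sum i a_i\geq\binom{c+1}{2}$.

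\textbf{Case $\gamma=0$.} As in Theorem~\ref{main+1}(1), we get $\operatorname{depth}G(L)\geq 1$, so $G(M)$ is Cohen-Macaulay by Sally's descent, and $h_M=h_N$. Then $e_1(N)=\binom{c+1}{2}+2$, and the excess $2$ over the minimum must be distributed among the $a_i$.
\begin{itemize}
\item The option $t-1=c+1$ is impossible, since it already costs $c+1\geq 2$ beyond the minimum, with equality only if $c=1$; this needs a short check.
\item The option $a_2=2$, $a_1=1$ is ruled out by Lemma~\ref{length} applied to $E=N$.
\item The only survivor is $a_1=3$ and $a_i=1$ for $2\leq i\leq c$.
\end{itemize}
This gives $h_M(z)=\mu(M)+3z+z^2+\cdots+z^c$. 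Since $a_1$ enters $e_2=\sum\binom{i}{2}a_i$ with coefficient zero, $e_2(M)=\binom{c+1}{3}$. I expect the main obstacle to be the bookkeeping that excludes every other distribution: $a_1=2$ together with some $a_j=2$ for $j\ge 2$, or $t-1>c$. Lemma~\ref{length} kills $a_2\ge 2$ only when $a_1=1$. So I would instead bound the cost: raising $a_j$ costs $j$, so an excess of $2$ allows only $a_1+2$ or $a_2+1$ (with $a_1=1$). Lengthening $t$ costs at least $c+1$, which is $\geq 2$ with equality only if $c=1$, and in that degenerate case I would check directly.

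\textbf{Case $\gamma=1$.} Then $G(M)$ is not Cohen-Macaulay and $e_1(N)=\binom{c+1}{2}+1$. The same cost argument forces $a_1=2$ and $a_i=1$ for $i\ge 2$, since $a_2=2$ costs $2$. Hence $e_2(N)=\binom{c+1}{3}$. The rest copies Theorem~\ref{main+1}(2). The single nonzero colength sits at $\alpha=s^*(\n,L)-1\leq r(\n)-1=c-1$ by Proposition~\ref{d1} and Remark~\ref{regmod}. Lemma~\ref{kam} then gives $e_2(M)\le\binom{c+1}{3}+c-1$. In the equality case, $\operatorname{depth}G(M)=r-1$, and
\[
h_M=h_N-(1-z)z^{c-1}=\mu(M)+2z+z^2+\cdots+z^{c-2}+2z^c .
\]

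\textbf{Case $\gamma=2$.} Now $e_1(N)=\binom{c+1}{2}$, so $h_N=\mu(M)+z+\cdots+z^c$ and $e_2(N)=\binom{c+1}{3}$. The colength function $\lambda_B((\n^{i+1}L:y)/\n^iL)$ has total $2$ and vanishes for $i\geq s^*(\n,L)$. Hence every $i$ with nonzero colength satisfies $i\leq s^*-1\le c-1$, and so $\sum i\lambda_i\le 2(c-1)$. Lemma~\ref{kam} gives $e_2(M)\le\binom{c+1}{3}+2(c-1)$. Equality forces depth $r-1$ by Lemma~\ref{kam}, with non-Cohen-Macaulayness coming from $\gamma\ne0$. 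It also forces both units of colength to sit at $i=c-1$, so $s(z)=2z^{c-1}$ and
\[
h_M=h_L=h_N-2(1-z)z^{c-1}=\mu(M)+z+\cdots+z^{c-2}-z^{c-1}+3z^c .
\]
This does not match the stated $h$-polynomial, so the computation should be rechecked against the statement. The second step I would scrutinize is whether the claimed form $\mu(M)+z+\cdots+z^{c-2}+2z^c$ instead reflects a constraint I am missing.
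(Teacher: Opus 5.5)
Your route is the paper's. On the point you flagged in case $\gamma=2$, your computation is correct and the statement is not. With $s(z)=2z^{c-1}$ one gets $h_M=\mu(M)+z+\cdots+z^{c-2}-z^{c-1}+3z^c$. The paper's proof writes $h_N(z)-(1-z)z^{c-1}$, dropping the factor $2$. The stated polynomial $\mu(M)+z+\cdots+z^{c-2}+2z^c$ has $e_1=\binom{c+1}{2}+1$ and $e_2=\binom{c+1}{3}+(c-1)$, so it is incompatible with the hypotheses of (3) in the equality case. You are not missing a constraint there.

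The genuine gaps are elsewhere, and the paper's proof has them too. First, your deferred check for $c=1$ in case $\gamma=0$ cannot succeed. For $c=1$ the profile $h_N=\mu+z+z^2$ has $e_1(N)=3=\binom{2}{2}+2$ and is allowed by Lemma~\ref{length}; it actually occurs. Take $A=k[[p,q,u,v]]/(p^2-q^3)$ and $M=A/pA\cong k[[q,u,v]]/(q^3)$.
\begin{itemize}
\item $G(A)\cong k[P,Q,U,V]/(P^2)$ is Cohen--Macaulay with $c=1$, $\operatorname{pd}_AM=1$, and $r=2<3=d$.
\item $G(M)\cong k[Q,U,V]/(Q^3)$, and with $x=u$, $y=v$ one has $N\cong k[[q]]/(q^3)$ and $\gamma=0$.
\item Hence $h_M=1+z+z^2$ and $e_1(M)=3$, but $e_2(M)=1\neq 0=\binom{2}{3}$.
\end{itemize}
So (1) holds only for $c\ge 2$. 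The paper's claim that there are ``exactly two possible $h$-polynomials'' overlooks this profile.

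Second, in cases $\gamma=1,2$ the step $\alpha=s^*(\mathfrak n,L)-1\le r(\mathfrak n)-1=c-1$ is unjustified. Proposition~\ref{d1} needs a principal reduction of $\mathfrak n$, but $\dim B=d-r+1\ge 2$. Read for the module $L$, it only gives $s^*(\mathfrak n,L)\le\min\{n: y\mathfrak n^nL=\mathfrak n^{n+1}L\}$, and this reduction number of $L$ is not bounded by $\reg G(B)$. Here is a counterexample:
\begin{itemize}
\item Let $A=k[[u,v,w]]$, so $c=0$, and let $M=R[[w]]$ with $R=k[[t^3,t^4,t^5]]$, $u\mapsto t^3$, $v\mapsto t^4$.
\item With $x=w$ and $y$ a general linear form in $u,v$, we get $L=R$ over $B=k[[u,v]]$.
\item $\mathfrak n^nL=t^{3n}k[[t]]$ for $n\ge 2$, while $\mathfrak nL$ consists of series in $t^3,t^4,t^6,t^7,\dots$. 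Thus $t^5\in(\mathfrak n^2L:y)\setminus\mathfrak nL$.
\item Hence $h_M=h_L=2+z^2$, $e_1(M)=2=\binom{1}{2}+2$, and $\gamma=1$ with $\alpha=1$, so $s^*(\mathfrak n,L)=2>0=r(\mathfrak n)$.
\item Finally $e_2(M)=1$, violating the bound $\binom{1}{3}+(c-1)=-1$ in (2), and also Corollary~\ref{reg2}.
\end{itemize}
For $c\ge 1$, bounding $\alpha$ by $c-1$ would need a genuinely new argument, such as a bound on the reduction number of $\mathfrak n$ on $L$ with respect to $y$. Neither your proposal nor the paper supplies one.
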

\begin{proof}
   (\ref{2+11}) If $\gamma=0$, then, by the proof of Theorem~\ref{main+1}, $G(M)$ is Cohen-Macaulay. It follows that $h_M(z)=h_N(z)$ and $ e_1(M)=e_1(N)=\binom{c+1}{2}+2$. Therefore, by the Lemma \ref{lowerbound}, there are exactly two possible $h$-polynomials for $M$, namely $h_M(z)=\mu(M)+3z+z^2+\cdots+z^c$ or $h_M(z)=\mu(M)+z+2z^2+z^3+\cdots+z^c$. Replacing $E$ by $N$ in Lemma \ref{length}, we obtain that the second case is not possible. Therefore, $h_M(z)=\mu(M)+3z+z^2+\cdots+z^c$ and $ e_2(M)=\binom{c+1}{3}$.

\noindent
   (\ref{2+12})  If $\gamma=1$, then $G(L)$ is not Cohen-Macaulay. Therefore, $G(M)$ is not Cohen-Macaulay. By Lemma \ref{lowerbound} and the proof of Theorem \ref{main+1}, we have $ e_1(N)=\binom{c+1}{2}+1$ and $h_N(z)=\mu(M)+2z+z^2+\ldots+ z^{c}.$ Hence $ e_2(N)=\binom{c+1}{3}$. By the same argument as in the proof of Theorem \ref{main+1}, we obtain $  e_2(M)\leq \binom{c+1}{3}+(c-1)$. Further, if equality holds, then $\depth G(M)= r-1$ and $h_M(z)=h_L(z)=h_N(z)-(1-z)z^{c-1}
=\mu(M)+2z+z^2+\cdots+z^{c-2}+2z^c$.

\noindent
(\ref{2+13}) If $\gamma =2$, then $G(L)$ is not Cohen-Macaulay. Therefore, $G(M)$ is not Cohen-Macaulay. By Lemma \ref{lowerbound} and the proof of Theorem \ref{main+1}, we have $ e_1(N)=\binom{c+1}{2}$ and $h_N(z)=\mu(M)+z+z^2+\ldots+ z^{c}.$ Hence $ e_2(N)=\binom{c+1}{3}$. 

Since $\gamma=2$, we have two possibilities,  for some $i=\alpha$, $\lambda_B \left(\dfrac{\n^{i+1}L:y}{\n^iL}\right)=2.$ or there is $\beta < \alpha$, such that  $\lambda_B \left(\dfrac{\n^{i+1}L:y}{\n^iL}\right)=1$ for $i=\beta ,\alpha$. In both the case,  we have $\displaystyle  \sum_{i\geq 1}i\lambda_B\left(\frac{{\n}^{i+1}L:y}{{\n}^iL}\right)\leq 2\alpha$. Note that, in the second case, the inequality is strict.

Now, By the same argument as in the proof of Theorem \ref{main+1}, we obtain $ e_2\leq \binom{c+1}{3}+2(c-1)$. Furthermore, if equality holds (i.e. equality in the first case), then by Lemma \ref{kam}, depth $G(M)=r-1$ and $\alpha=c-1.$ This implies that $h_M(z)=h_L(z)=h_N(z)-(1-z)z^{c-1}
=\mu(M)+z+z^2+\cdots+z^{c-2}+2z^c$.
\end{proof}

\begin{remark}
   Under the same hypotheses as in Lemma~\ref{implemma}, if $e_1(M)=\binom{c+1}{2}+s$ for some $s\geq3$, then we obtain a similar upper bound for $e_2(M)$ as in Theorem \ref{main+1}, \ref{main}. Since $e_2(M)$ is always non-negative (see \cite[Proposition 3.1]{rossi2010hilbert}), there are only finitely many possibilities for $e_2(M)$ whenever $e_1(M)=\binom{c+1}{2}+s$ for some $s\geq 0.$
\end{remark}

\begin{corollary}\label{reg2}
     Let $(A,\m)$ be a regular local ring of dimension $d$ and $M$ be a Cohen-Macaulay $A$-module of dimension $r<d$. If $e_1(M)=\binom{c+1}{2}+i$, where $i=1,2$, then $G(M)$ is Cohen-Macaulay and $e_2(M)=0$. 
\end{corollary}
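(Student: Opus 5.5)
Since $A$ is regular, $G(A)$ is a polynomial ring over $k$, hence Cohen-Macaulay with $c=\operatorname{reg}G(A)=0$. Every finitely generated $A$-module has finite projective dimension by the Auslander--Buchsbaum--Serre theorem. So $M$ satisfies all the hypotheses of Lemma~\ref{implemma}, and Theorems~\ref{main+1} and \ref{main} both apply with $c=0$. The plan is to substitute $c=0$ and see what each case forces.

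The key observation concerns $\gamma$. By Lemma~\ref{implemma}, $\gamma\le i\le 2$. Suppose $\gamma\neq 0$. In the proof of Theorem~\ref{main+1}(2), a nonzero term $\lambda_B\bigl((\n^{\alpha+1}L:y)/\n^\alpha L\bigr)$ with $\alpha\ge 1$ gives
\[
\alpha = s^*(\n,L)-1 \le r(\n)-1 = \operatorname{reg}G(B)-1 = c-1 = -1,
\]
where we use Proposition~\ref{d1} and Remark~\ref{regmod}. This is a contradiction. The same inequality $\alpha\le c-1$ is used in Theorem~\ref{main}(2),(3), so those cases are impossible as well.

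A more direct route also works: $B$ is regular, so $r(\n)=0$. Proposition~\ref{d1} then gives $s^*(\n,L)=0$, so $\n^{m+1}L:y=\n^mL$ for all $m\ge 0$, and again $\gamma=0$. Hence $\gamma=0$ in every case. Part (1) of Theorem~\ref{main+1} or of Theorem~\ref{main} then shows that $G(M)$ is Cohen-Macaulay.

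For $e_2$, we use the fact that when $G(M)$ is Cohen-Macaulay, $h_M=h_N$. Here $N$ has finite length and finite projective dimension over the regular ring $B$. The proof of Lemma~\ref{lowerbound} writes $h_N(z)=\mu(N)+a_1z+\cdots$. With $c=0$, part (1) of each theorem gives $e_2(M)=\binom{1}{3}=0$. The listed $h$-polynomials should be read as degenerate (a linear term only), so $h_N$ is linear and $e_2=0$ directly.

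The main obstacle is that the displayed $h$-polynomials in the theorems are stated assuming $c\ge 1$, for example $\mu(M)+2z+z^2+\cdots+z^c$. I would therefore not rely on their exact form. Instead I would argue from $e_1(N)=e_1(M)\in\{1,2\}$ that $h_N$ is linear. If $a_j\ne 0$ for some $j\ge2$, then $a_1\ge1$, so $e_1(N)\ge a_1+2a_2\ge 3$, which is a contradiction (the case $a_1=0$ forces all higher $a_j=0$). Hence $h_N=\mu+iz$ and $e_2(M)=e_2(N)=0$.
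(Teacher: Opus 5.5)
\textbf{Gap.} Your route is the paper's. Both exclude $\gamma>0$ through the inequality $\alpha=s^*(\n,L)-1\le r(\n)-1=c-1$ from the proof of Theorem~\ref{main+1}(2); the paper then appeals to $e_2(M)\ge 0$, you to $\alpha\ge 1$. That inequality is exactly where the argument fails, and your ``more direct route'' makes this visible.

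Proposition~\ref{d1} requires $\n$ to have a \emph{principal} reduction. But $\dim B=d-r+1\ge 2$ because $r<d$, so $\n\subset B$ has analytic spread at least $2$ and has no principal reduction. Hence $r(\n)=\operatorname{reg}G(B)=0$ gives no control over $s^*(\n,L)$. The only bound Proposition~\ref{d1} yields here comes from the one-dimensional ring $B/\operatorname{ann}_B L$, whose reduction number has nothing to do with $c$. Notice that your conclusion $\n^{m+1}L:y=\n^mL$ for all $m$ never uses the hypothesis on $e_1$. It would make $G(L)$ Cohen--Macaulay for every one-dimensional Cohen--Macaulay module over a regular local ring of dimension at least $2$, which is false.

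\textbf{The statement fails for $i=2$.} The same example refutes the paper's proof, which has the same defect. Let $A=k[[t_1,t_2,t_3]]$, so $c=0$, and let $M=\operatorname{coker}\begin{pmatrix} t_1 & t_2\\ 0 & t_1^2\end{pmatrix}$, with generators $e_1,e_2$ satisfying $t_1e_1=0$ and $t_2e_1=-t_1^2e_2$.

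The determinant is $t_1^3\neq 0$, so $M$ has projective dimension $1$ and is Cohen--Macaulay of dimension $r=2<3=d$. Also $M=M_0[[t_3]]$ with $M_0=M/t_3M$. In $M_0$ one has $\m M_0=\m e_2$ and $Ae_2\cong A/(t_1^3,t_3)$. So $M_0$ has Hilbert function $2,2,3,3,\ldots$, and $h_M(z)=h_{M_0}(z)=2+z^2$.

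Thus $e_1(M)=2=\binom{c+1}{2}+2$, yet $e_2(M)=1$. Moreover $G(M)\cong G(M_0)[T_3]$ has depth $1$, because the initial form of $e_1$ in $G(M_0)$ is killed by $T_1$ and $T_2$. Taking $x=t_3$, we get $L\cong M_0$, $e_1(N)=1$ and $\gamma=1$. Also $e_1\in(\n^2L:y)\setminus\n L$, so $s^*(\n,L)\ge 2>0=r(\n)$.

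\textbf{The case $i=1$.} Here the claim does hold, by an elementary argument in the spirit of your last paragraph. If $\gamma\ge 1$, then $e_1(N)=1-\gamma\le 0$, which forces $\n L=yL$. Then $\n^{m+1}L:y=y^{m+1}L:y=\n^mL$ for all $m$, so $\gamma=0$, a contradiction. Hence $\gamma=0$, $G(M)$ is Cohen--Macaulay, $h_M=h_N=\mu(M)+z$, and $e_2(M)=0$.
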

\begin{proof}
Since $A$ is a regular local ring, we have $c=\reg G(A)=0$ and projective dimension of $M$ is finite. Hence, the case $\gamma>0$ in Theorems \ref{main+1} and \ref{main} cannot occur, as it would contradict \cite[Proposition 3.1]{rossi2010hilbert}. The result now follows directly from Theorems \ref{main+1} and \ref{main}.
\end{proof}

We now present an example in which equality holds in Theorem~\ref{main+1}(1). 

\begin{example}
\normalfont
Let $A=k[[x,y,z,w,u,v]]/(x^2,y^2,z^3,xy,yz,zx)$, where $k$ is an infinite field, and $M=A/(w)$. By Example~\ref{ex1}, $M$ has finite projective dimension over $A$. Moreover, $M$ is a Cohen-Macaulay $A$-module of dimension $2$. We have $G(A)\cong k[X,Y,Z,W,U]/(X^2,Y^2,Z^3,XY,YZ,ZX)$ with $c=\operatorname{reg} G(A)=2$, and
$G(M)\cong k[X,Y,Z,U]/(X^2,Y^2,Z^3,XY,YZ,ZX)$. The Hilbert series of $M$ is
$\displaystyle H_M(z)=\frac{1+3z+z^2}{(1-z)^2}$. Hence, $h_M(z)=1+3z+z^2$, and therefore
$e_1(M)=5=\binom{3}{2}+2=\binom{c+1}{2}+2$ and
$e_2(M)=1=\binom{c+1}{3}$.

\end{example}

We have not been able to construct an example satisfying equality in non Cohen-Macaulay cases.

\section{Cohen-Macaulay modules over strict complete intersection rings}
In this section, we discuss the results of Theorems \ref{main+1} and \ref{main} to the setting where $A$ is a strict complete intersection ring. Our aim is to obtain analogous bounds and structural consequences in this broader setting, without imposing the assumption that $M$ has finite projective dimension.

\begin{point}\label{complexity}
   \normalfont
   Let $A$ be a local ring and $M$ be an $A$-module. Let $\beta_i^A(M)=\lambda_A(\operatorname{Tor}_i(M,k))$ be the $i$-th Betti number of $M$ over $A.$ The complexity of $M$ over $A$ is defined as $$\textrm{cx}_A(M)=\inf \left\{b\in \N \quad \vline \quad {\limsup_{n\to \infty}} \,\, \frac{\beta_n^A(M)}{n^{b-1}}< \infty \right\}.$$
   Note that $M$ has bounded Betti numbers iff $\textrm{cx}_A(M)\leq 1.$
\end{point}

\begin{remark}
    Let $(A,\m,k)$ be a complete intersection ring with codimension $l$. Let $M$ be a finite $A$-module. It is well know that $\cx_A(M)\leq \cx_A(k)=l.$
\end{remark}

Let $(A,\m)$ be a local ring. For $x\in \m^i\setminus \m^{i+1}$ we set $\text{ord}(x)=i$ as order of $x.$ The next result is a generalization of Theorem $5.8$ of \cite{Quasipure}.

\begin{proposition}\label{ci}
    Let  $(Q,\n)$ be a regular local ring with $f_1,\ldots ,f_l\in \n^2$ of order $s$ such that $f_1^*,\ldots ,f_l^* $ is a $G(Q)$-regular sequence.  Let $A=Q/(f_1,\ldots ,f_l)$ and $M$ be Cohen Macaulay $A$-module of dimension $r$ with $\operatorname{cx}_A(M)=t< l$. Then $e_0\geq \mu(M)+c$ and $e_1(M)\geq \binom{c+1}{2}$, where $c=(l-r)(s-1).$ Further, if $e_1(M)$ attains its lower bound then $G(M)$ is Cohen-Macaulay.
\end{proposition}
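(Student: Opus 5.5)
The plan is to reduce to the situation of Lemma \ref{lowerbound} and Proposition \ref{reg}. The one new ingredient is a Loewy length bound for the finite length reduction $N$ of $M$, this time in terms of $c=(l-r)(s-1)$. As in the proof of Lemma \ref{lowerbound}, choose $x_1,\ldots,x_r\in\m\setminus\m^2$ forming an $A\oplus M$-superficial sequence, and arrange that $x_1^*,\ldots,x_r^*$ is $G(A)$-regular. This is possible since $G(A)=G(Q)/(f_1^*,\ldots,f_l^*)$ is Cohen-Macaulay, and $k$ is infinite by \ref{basechange}. Set $B=A/(x_1,\ldots,x_r)$ and $N=M/(x_1,\ldots,x_r)M$. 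Lifting the $x_i$ to $Q$, we get $B=Q'/(f_1,\ldots,f_l)$ with $Q'=Q/(\tilde x_1,\ldots,\tilde x_r)$ regular. The images of the $f_j$ still have order $s$ and their initial forms remain a $G(Q')$-regular sequence. Moreover $N$ has finite length, and $\operatorname{cx}_B(N)=\operatorname{cx}_A(M)=t$, because the $x_i$ are $A$- and $M$-regular.

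The key step is to show $\ell\ell(N)\geq (l-t)(s-1)+1$, and then $\ell\ell(N)\geq c+1$. I would not expect to prove the first form directly. Instead I would reduce the complexity to zero, using the theory of support varieties or quasi-lifts over complete intersections, as in Section 5 of \cite{Quasipure}. After a general change of the $f_j$ (allowed by the infinite residue field), there is a quasi-lift: $N$ has finite projective dimension over an intermediate complete intersection $C=Q'/(f_1,\ldots,f_{l-t})$, possibly after replacing $N$ by a syzygy-type module of the same Loewy length. This is the content of the Avramov--Gasharov--Peeva theory, invoked as in Theorem 5.8 of \cite{Quasipure}. Now $G(C)=G(Q')/(f_1^*,\ldots,f^*_{l-t})$ is a complete intersection with $\operatorname{reg}G(C)=(l-t)(s-1)$ and is Cohen-Macaulay. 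Theorem B of \cite{nawaz} applied over $C$ then gives $\ell\ell(N)\geq (l-t)(s-1)+1$.

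The point I would check most carefully is how this compares with $c=(l-r)(s-1)$. Since $N$ has finite length, the quantity that enters is $\dim C$ versus $\dim N=0$. If the intended bound is really stated through the dimension of $M$ (as the definition of $c$ suggests), the reduction should be arranged so that the ambient ring has dimension matching $r$. In other words, one uses superficial elements only up to $\dim M=r$ and compares codimensions. I expect making this index bookkeeping consistent to be the main obstacle, together with justifying that the quasi-lift preserves Loewy length and the associated graded structure. That justification follows \cite{Quasipure} Theorem 5.8.

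Once $\ell\ell(N)\geq c+1$ is known, the rest copies the earlier proofs. Write $a_i=\lambda(\m^iN/\m^{i+1}N)$. Then $e_0(N)\geq\mu(N)+c$ and $e_1(N)=\sum i a_i\geq\binom{c+1}{2}$. By \cite[Corollary 10]{HCCMM} we have $\mu(M)=\mu(N)$, $e_0(M)=e_0(N)$ and $e_1(M)\geq e_1(N)$. If $e_1(M)=\binom{c+1}{2}$, the chain argument of Proposition \ref{reg} applies verbatim. It forces $e_1(L)=e_1(L/x_rL)$ for $L=M/(x_1,\ldots,x_{r-1})M$, hence $G(L)$ is Cohen-Macaulay, and Sally's descent (\ref{sally}) gives that $G(M)$ is Cohen-Macaulay.
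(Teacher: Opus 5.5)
The step you flag as the main obstacle, passing from $\ell\ell(N)\geq (l-t)(s-1)+1$ to $\ell\ell(N)\geq c+1$ with $c=(l-r)(s-1)$, cannot be carried out. No rearrangement of the superficial sequence will rescue it, because with that value of $c$ the statement is false whenever $r<t$. For instance, take $A=k[[x,y]]/(x^2,y^2)$, so $l=s=2$, and $M=A/(x)\cong k[[y]]/(y^2)$. Then $r=0$, and the resolution $\cdots\xrightarrow{x}A\xrightarrow{x}A\to M\to 0$ gives $t=1<l$. Also $h_M(z)=1+z$, so $e_0(M)=2<3=\mu(M)+(l-r)(s-1)$ and $e_1(M)=1<\binom{3}{2}$.

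The $c$ in the statement is a misprint for $(l-t)(s-1)$. The paper's proof sets $c=\operatorname{reg}G(T)=(l-t)(s-1)$. The examples in Section 4 likewise use $c=(l-\operatorname{cx}_A(M))(s-1)$; for $A/(x)$ over $k[[x,y,z,u]]/(x^2,y^2,z^2)$ they take $c=2$, where $(l-r)(s-1)$ would give $1$. Read this way, the inequality $\ell\ell(N)\geq (l-t)(s-1)+1$ you already obtain is exactly what is needed. The rest of your argument then goes through: the bounds on $e_0(N)$ and $e_1(N)$, and the chain argument of Proposition \ref{reg} with Sally's descent.

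With that correction your proof is valid, but it is ordered differently from the paper's. The paper applies Theorem 5.7 of \cite{Quasipure} to $M$ itself, writing $A=T/(g_1,\ldots,g_t)$ with $\operatorname{pd}_T M<\infty$ and $G(T)$ a complete intersection of regularity $(l-t)(s-1)$. It then quotes Lemma \ref{lowerbound} and Proposition \ref{reg} for $M$ regarded as a $T$-module. Since $\mathfrak{m}=\mathfrak{m}_TA$, the two adic filtrations on $M$ coincide, so the Hilbert coefficients and $G(M)$ are unchanged. Cohen--Macaulayness over $G(T)$ is then the same as over $G(A)$.

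You instead reduce to the finite-length module $N$ first and quasi-lift $N$ over $B$. This forces you to check that $B$ is again a strict complete intersection with generators of order $s$, and that $\operatorname{cx}_B(N)=t$; both claims are correct as you state them. You must then rerun the earlier proofs rather than cite them, so the paper's order is shorter.

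Finally, drop the aside about replacing $N$ by a ``syzygy-type module of the same Loewy length''. The quasi-lift result applies to $N$ directly, just as the paper applies it to $M$. Syzygies need not preserve Loewy length, so that detour would only introduce a new gap.
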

\begin{proof}
    By Theorem 5.7 of \cite{Quasipure}, we get that $A=T/(g_1,\ldots ,g_t)$ and $M$ has finite projective dimension as $T$-module,  where $T=Q/(g_{t+1},\ldots ,g_l)$ and $(g_1^*,\ldots ,g_l^*)$ is a $G(Q)$-regular sequence. Since  $\text{ord}(g_i)=s$ for all $1\leq i\leq l$, we obtain that $\operatorname{reg}G(T)=(l-t)(s-1)=c$. By Lemma \ref{lowerbound}, we have $e_0\geq \mu(M)+c$ and $e_1(M)\geq \binom{c+1}{2}$. Furthermore, If $e_1(M)= \binom{c+1}{2}$ then by theorem \ref{reg}, $G(M)$ is Cohen-Macaulay $G(T)$-module (and so a Cohen-Macaulay $G(A)$-module).
\end{proof}

We now give an example in which $e_1(M)$ attains its lower bound in Proposition \ref{ci}. 

\begin{example}
\normalfont
     Let $Q=k[[x,y,z]]$, where $k$ is an infinite field, and set $A=Q/(x^2,y^2).$ Then $l=2$ and $s=2$. Let
$M=A/(x).$ Since $M\cong k[[y,z]]/(y^2)$, the module $M$ is Cohen-Macaulay. Moreover, $M$ has a periodic free resolution with Betti numbers $\beta_i^A(M)=1$ for all $i\geq 0$, and hence
$\operatorname{cx}_A(M)=1.$ Therefore
$c=(l-\operatorname{cx}_A(M))(s-1)=(2-1)(2-1)=1.$ Also,
$G(M)\cong k[Y,Z]/(Y^2),$ so $G(M)$ is Cohen-Macaulay and its Hilbert series is $\displaystyle H_M(z)=\frac{1+z}{1-z}.$ Thus
$e_0(M)=2=\mu(M)+c$ and $ e_1(M)=1=\binom{c+1}{2}.$
\end{example}

Next, We study the cases when $e_1(M)=\binom{c+1}{2}+i$, where $i=1,2$.
\begin{theorem}\label{ci1}
(with the same hypothesis as in Proposition \ref{ci}) Let $e_1(M)= \binom{c+1}{2}+1$. Then the following hold.
\begin{enumerate}
    \item If $G(M)$ is Cohen-Macaulay, then $e_2(M)=\binom{c+1}{3}$.
    \item If $G(M)$ is not Cohen-Macaulay, then $e_2(M)\leq \binom{c+1}{3}+(c-1).$ Further, if equality holds, then $\depth G(M)=r-1.$
\end{enumerate}
\end{theorem}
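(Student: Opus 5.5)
The plan is to reduce to Theorem \ref{main+1} by the same change of rings used in the proof of Proposition \ref{ci}. By Theorem 5.7 of \cite{Quasipure}, after the base change of \ref{basechange} (which changes neither the Hilbert series, nor $\depth G(M)$, nor the complexity), we may write $A=T/(g_1,\ldots,g_t)$. Here $T=Q/(g_{t+1},\ldots,g_l)$, the initial forms $g_1^*,\ldots,g_l^*$ form a $G(Q)$-regular sequence of order $s$, and $M$ has finite projective dimension over $T$. Then $T$ is a complete intersection, hence Cohen-Macaulay. Moreover $G(T)=G(Q)/(g_{t+1}^*,\ldots,g_l^*)$ is Cohen-Macaulay with $\reg G(T)=(l-t)(s-1)$, and this is the $c$ of the statement (as in the proof of Proposition \ref{ci}).

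We also need $\dim M<\dim T$. We have $\dim T=\dim Q-(l-t)\geq \dim A+t>\dim A\geq r$ as soon as $t\geq 1$. If $t=0$, then $M$ already has finite projective dimension over $A=T$. In that case $\dim M<\dim A$ must be arranged separately: when $M$ is maximal Cohen-Macaulay of finite projective dimension it is free, and the lower bound of Lemma \ref{lowerbound} fails. I expect this to be handled exactly as in Proposition \ref{ci}, which already uses this reduction, so I will cite it.

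Now regard $M$ as a $T$-module. The $\m_T$-adic and $\m_A$-adic filtrations on $M$ coincide, since $\m_T M=\m_A M$. Hence $G_T(M)$ and $G_A(M)$ have the same Hilbert series, and the same depth: the depth of $G(M)$ is measured with respect to the irrelevant ideal, and $G(A)$ is a quotient of $G(T)$. So $e_1$, $e_2$, $\mu(M)$ and the Cohen-Macaulayness of $G(M)$ are the same whether computed over $A$ or over $T$. All hypotheses of Lemma \ref{implemma} and Theorem \ref{main+1} now hold for the $T$-module $M$, with $c=\reg G(T)$ and $e_1(M)=\binom{c+1}{2}+1$.

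Choose an $T\oplus M$-superficial sequence $x_1,\ldots,x_{r-2},x,y$ and define $\gamma$ as in Lemma \ref{implemma}, so that $\gamma\in\{0,1\}$. If $\gamma=0$, Theorem \ref{main+1}(1) shows that $G(M)$ is Cohen-Macaulay and $e_2(M)=\binom{c+1}{3}$. If $\gamma=1$, the proof of Theorem \ref{main+1}(2) shows that $G(L)$, and hence $G(M)$, is not Cohen-Macaulay; it also gives $e_2(M)\leq\binom{c+1}{3}+(c-1)$, with $\depth G(M)=r-1$ in case of equality.

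Finally we match these cases to the dichotomy in the statement. If $G(M)$ is Cohen-Macaulay, then $\gamma\neq 1$, because $\gamma=1$ forces $G(M)$ to be non-Cohen-Macaulay; so $\gamma=0$ and (1) follows. If $G(M)$ is not Cohen-Macaulay, then $\gamma\neq 0$, because $\gamma=0$ gives depth $G(L)\geq 1$ and then Sally's descent \ref{sally} makes $G(M)$ Cohen-Macaulay; so $\gamma=1$ and (2) follows. The main point needing care is the transfer of the depth and Hilbert-function data between $G_A(M)$ and $G_T(M)$, together with the dimension condition $r<\dim T$; the rest is a direct citation of Theorem \ref{main+1}.
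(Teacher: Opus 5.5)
Your proposal is correct and follows the paper's proof: pass to $T=Q/(g_{t+1},\ldots,g_l)$ with $c=\reg G(T)$, observe that the $\m_T$-adic and $\m$-adic filtrations on $M$ coincide, and apply Theorem \ref{main+1}. The differences are cosmetic: you choose a $T\oplus M$-superficial sequence directly, whereas the paper takes an $A\oplus M$-superficial one and checks that it is $T\oplus M$-superficial, and you spell out the equivalence ``$\gamma=0$ iff $G(M)$ is Cohen--Macaulay'', which the paper leaves implicit.

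One side remark of yours is inaccurate. For free $M$ the bounds of Lemma \ref{lowerbound} do not fail, since they follow from $h_A$ having positive coefficients in degrees $0,\ldots,c$. The case $t=0$, $r=\dim A$ that you flag (and that the paper does not address) is harmless: there $G(M)=G(A)^{\mu}$ is Cohen--Macaulay, and (1) can be checked directly.
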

\begin{proof}
    By Theorem 5.7 of \cite{Quasipure}, we get that $A=T/(g_1,\ldots ,g_t)$ and $M$ has finite projective dimension as $T$-module, where $T=Q/(g_{t+1},\ldots ,g_l)$ and $(g_1^*,\ldots ,g_l^*)$ is a $G(Q)$-regular sequence. Note that for $t=l$, we may choose $f_i=g_i$ for all $1\leq i\leq l.$ Since  $\text{ord}(g_i)=s$ for all $1\leq i\leq l$, we obtain that $\operatorname{reg} G(T)=(l-t)(s-1)=c$. Since the residue field of $A$ is infinite, so is $T$. Set $\m$ and $\m_T$ be a maximal ideals of $A$ and $T$ respectively.
    
    Let $x_1,\ldots ,x_{r-2},x,y$ be $A\oplus M$-superficial sequence. Since $g_1^*,\ldots,g_t^*$ is a $G(T)$-regular sequence and $G(A)\cong G(T)/(g_1^*,\ldots,g_t^*)$, the initial forms of $x_1,\ldots,x_{r-2},x,y$ form a regular sequence on $G(T)$; hence they constitute a $T$-superficial sequence. Moreover, $(g_1,\ldots,g_t)M=0$, so the $\mathfrak m_T$-adic and $\mathfrak m$-adic filtrations on $M$ coincide, and therefore the sequence is $T\oplus M$-superficial. Set $(B,\n)=(T/(x_1,\ldots ,x_{r-2},x),\m_T/(x_1,\ldots ,x_{r-2},x))$, $L=M/(x_1,\ldots ,x_{r-2},x)M$, and $N=L/yL$. The result now follows from Theorem \ref{main+1}.
\end{proof}

\begin{theorem}\label{ci2}
(with the same hypothesis as in Proposition \ref{ci}) Let $e_1(M)= \binom{c+1}{2}+2$. Then the following hold.
\begin{enumerate}
    \item If $G(M)$ is Cohen-Macaulay, then $e_2(M)=\binom{c+1}{3}$.
    \item If $G(M)$ is not Cohen-Macaulay, then $e_2(M)\leq \binom{c+1}{3}+2(c-1).$
\end{enumerate}
\end{theorem}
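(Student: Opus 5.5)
The plan is to follow the proof of Theorem \ref{ci1} verbatim and transfer the problem to the finite projective dimension setting, where Theorem \ref{main} applies. First, by Theorem 5.7 of \cite{Quasipure}, write $A=T/(g_1,\ldots,g_t)$, where $T=Q/(g_{t+1},\ldots,g_l)$ and $g_1^*,\ldots,g_l^*$ is a $G(Q)$-regular sequence of forms of degree $s$. Then $M$ has finite projective dimension over $T$. The ring $T$ is Cohen-Macaulay and $G(T)$ is a complete intersection, hence Cohen-Macaulay, with $\operatorname{reg}G(T)=(l-t)(s-1)=c$. Since $t<l$, we get $\dim T=\dim Q-(l-t)>\dim A\geq r$, so the hypotheses of Lemma \ref{implemma} hold for the $T$-module $M$.

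Next, the $\m_T$-adic and $\m$-adic filtrations of $M$ coincide, because $(g_1,\ldots,g_t)M=0$. Hence $G(M)$, $h_M(z)$ and all $e_i(M)$ are the same whether $M$ is regarded over $A$ or over $T$. Depth and the Cohen-Macaulay property of $G(M)$ also agree: $G(M)$ is a $G(A)=G(T)/(g_1^*,\ldots,g_t^*)$-module, and depth with respect to the irrelevant ideal is unchanged under this finite change of rings. As in Theorem \ref{ci1}, choose $x_1,\ldots,x_{r-2},x,y$ to be $A\oplus M$-superficial. Their initial forms are then regular on $G(T)$, so the sequence is $T\oplus M$-superficial. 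Set $B$, $L$, $N$ over $T$ and define $\gamma$ as in Lemma \ref{implemma}. With $e_1(M)=\binom{c+1}{2}+2$, Lemma \ref{implemma} gives $\gamma\in\{0,1,2\}$.

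For part (1), suppose $G(M)$ is Cohen-Macaulay. Then $G(L)$ is Cohen-Macaulay by Sally's descent, so every colon quotient $(\n^{i+1}L:y)/\n^iL$ vanishes and $\gamma=0$. Theorem \ref{main}(1) then gives $e_2(M)=\binom{c+1}{3}$. For part (2), suppose $G(M)$ is not Cohen-Macaulay. Then $\gamma\neq0$, since otherwise $\operatorname{depth}G(L)\geq1$ and Sally's descent would make $G(M)$ Cohen-Macaulay. Thus $\gamma\in\{1,2\}$. Theorem \ref{main}(2) gives $e_2(M)\leq\binom{c+1}{3}+(c-1)$, and Theorem \ref{main}(3) gives $e_2(M)\leq\binom{c+1}{3}+2(c-1)$. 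The first bound is at most the second when $c\geq1$, which proves (2) in that case.

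The main obstacle is the degenerate case $c=0$, which occurs when $s=1$, or formally when $l=t$ (excluded here). In this case $c-1=-1$, so the two bounds must be compared with care. The intended reading is that if $c=0$ then $T$ is regular. Arguing as in Corollary \ref{reg2}, the cases with $\gamma>0$ cannot occur, since they would force $e_2(M)<0$, contradicting \cite[Proposition 3.1]{rossi2010hilbert}. So the non-Cohen-Macaulay case is vacuous and the statement holds trivially. The remaining points to check, transferring superficiality and the depth of $G(M)$ between $A$ and $T$, were already handled in Theorem \ref{ci1}.
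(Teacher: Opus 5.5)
Your proposal is correct and matches the paper's argument: the paper proves Theorem~\ref{ci2} in one line by combining the reduction to $T$ from the proof of Theorem~\ref{ci1} with the $\gamma\in\{0,1,2\}$ case analysis of Theorem~\ref{main}, and that is exactly what you spell out. Your ``main obstacle'' never occurs, because $f_i\in\n^2$ forces $s\geq 2$ and $t<l$ then gives $c=(l-t)(s-1)\geq 1$; also, since $\dim T-\dim A=t$, the inequality $\dim T>\dim A$ comes from $t\geq 1$, not from $t<l$.
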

\begin{proof}
   The result follows by combining the arguments in the proofs of Theorems \ref{ci1} and \ref{main}.
\end{proof}

\begin{corollary}
    (with the same hypothesis as in Proposition \ref{ci}) Let $\cx_A(M)=l$. If $e_1(M)=\binom{c+1}{2}+i$, where $i=1,2$, then $G(M)$ is Cohen-Macaulay and $e_2(M)=0$. 
\end{corollary}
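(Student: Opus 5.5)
When $\cx_A(M)=l$, the constant is $c=(l-r)(s-1)$ as in Proposition \ref{ci}. The statement is really about the case where this regularity constant vanishes. Mirroring the proof of Corollary \ref{reg2}, the plan is to show that $M$ can be regarded as a module of finite projective dimension over a ring $T$ with $\operatorname{reg}G(T)=0$, and then read off the conclusion from Theorems \ref{ci1} and \ref{ci2}.

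First I will apply Theorem 5.7 of \cite{Quasipure} as in the proofs of Proposition \ref{ci} and Theorem \ref{ci1}. With $t=\cx_A(M)=l$, we may take $g_i=f_i$ for all $i$, so $T=Q$ is regular and $M$ has finite projective dimension over $T$. Then $\operatorname{reg}G(T)=(l-t)(s-1)=0$. I read the constant $c$ in this setting as $\operatorname{reg}G(T)=(l-t)(s-1)$, consistent with the example following Proposition \ref{ci}; with this reading $c=0$.

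Next I will use that the $\m_T$-adic and $\m$-adic filtrations on $M$ coincide, since $(f_1,\ldots,f_l)M=0$. Hence $e_i(M)$ and $G(M)$ are the same whether $M$ is viewed over $A$ or over $T$. Now suppose $G(M)$ is not Cohen--Macaulay. Theorems \ref{ci1}(2) and \ref{ci2}(2) give
\[
e_2(M)\le \binom{1}{3}+i(c-1)=-i<0 .
\]
This contradicts the nonnegativity of $e_2(M)$ from \cite[Proposition 3.1]{rossi2010hilbert}, which applies because $M$ is Cohen--Macaulay. Therefore $G(M)$ is Cohen--Macaulay, and part (1) of those theorems gives $e_2(M)=\binom{1}{3}=0$.

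The main obstacle is the extreme case $t=l$. The hypothesis $t<l$ of Proposition \ref{ci} is excluded, so I must check that Theorems \ref{ci1} and \ref{ci2} still go through. This holds because Lemma \ref{lowerbound} only needs $\dim M=r<\dim T$. That inequality is automatic: $\dim T=\dim Q=\dim A+l>r$, since $l\ge 1$ (if $l=0$ then $A=Q$ is regular and Corollary \ref{reg2} applies directly). With this checked, the argument is essentially Corollary \ref{reg2} applied over $T=Q$.
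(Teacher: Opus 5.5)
Your proposal is correct and follows the paper's route. With $t=l$ you take $T=Q$, so $c=\operatorname{reg}G(T)=0$, and you rule out the non-Cohen--Macaulay case because the upper bound would force $e_2(M)<0$, contradicting nonnegativity. The only difference is packaging: the paper finishes by citing Corollary \ref{reg2} over the regular ring $Q$, which needs only $r<\dim Q$ and so avoids the $t<l$ clash you check by hand, whereas you run the same argument inline through Theorems \ref{ci1} and \ref{ci2}.
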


\begin{proof}
    Since $\cx_A(M)=l$, we can choose $T=Q$ and  $g_i=f_i$ for all $1\leq i\leq l$ in Theorem 5.7 of \cite{Quasipure}. It follows that $c=\reg G(T)=0$. The result now follows from Corollary \ref{reg2}.
\end{proof}

We provide examples illustrating the Cohen-Macaulay case of Theorems \ref{ci1} and \ref{ci2}.

\begin{example}
\begin{enumerate}
\normalfont
    \item Let $Q=k[[x,y,z,u]]$, where $k$ is an infinite field, and set $A=Q/(x^2,y^2,z^2).$ Then $l=3$ and $s=2$. Let
$M=A/(x).$ Since $M\cong k[[y,z,u]]/(y^2,z^2)$, the module $M$ is Cohen-Macaulay. Moreover, $M$ has a periodic free resolution with Betti numbers $\beta_i^A(M)=1$ for all $i\geq 0$, and hence
$\operatorname{cx}_A(M)=1.$ Therefore
$c=(l-\operatorname{cx}_A(M))(s-1)=(3-1)(2-1)=2.$ Also,
$G(M)\cong k[Y,Z,U]/(Y^2,Z^2),$ so $G(M)$ is Cohen-Macaulay and its Hilbert series is $\displaystyle H_M(z)=\frac{(1+z)^2}{1-z}=\frac{1+2z+z^2}{1-z}.$ Thus $e_1(M)=4=\binom{c+1}{2}+1$ and $e_2(M)=1=\binom{c+1}{3}.$ Thus, it satisfies the Cohen-Macaulay case of Theorem \ref{ci1}.

\item Let $Q=k[[x,y,z]]$, where $k$ is an infinite field, and set $A=Q/(x^2,y^2).$ Then $l=2$ and $s=2$. Let
$N=A/(x)$ and $M=N^{ 3}.$ Since $N\cong k[[y,z]]/(y^2)$, the module $M$ is Cohen-Macaulay. Moreover, $\beta_i^A(N)=1$ for all $i\geq0$, and hence $\operatorname{cx}_A(M)=1.$ Therefore
$c=(l-\operatorname{cx}_A(M))(s-1)=(2-1)(2-1)=1.$ Also,
$G(M)\cong (k[Y,Z]/(Y^2))^{3}$ and
$\displaystyle H_M(z)=\frac{3+3z}{1-z}.$ Thus
$e_1(M)=3=\binom{c+1}{2}+2$ and $e_2(M)=0=\binom{2}{3}.$ Thus, it satisfies the Cohen-Macaulay case of Theorem \ref{ci2}.
\end{enumerate}

\end{example}
%\section*{Acknowledgements}
%The authors thank the referee for carefully reading the manuscript and for many pertinent comments.

%\bibliographystyle{amsplain}
%\bibliography{Samarendra}
\providecommand{\bysame}{\leavevmode\hbox to3em{\hrulefill}\thinspace}
\providecommand{\MR}{\relax\ifhmode\unskip\space\fi MR }
 %\MRhref is called by the amsart/book/proc definition of \MR.
\providecommand{\MRhref}[2]{
  \href{http://www.ams.org/mathscinet-getitem?mr=#1}{#2}
}

\end{document}